\newtheorem{thm}{Theorem}[section]
\newtheorem{cor}[thm]{Corollary}
\newtheorem{lem}[thm]{Lemma}
\newtheorem{prop}[thm]{Proposition}
\newtheorem{defn}[thm]{Definition}
\newtheorem{example}[thm]{Example}
\newcommand{\abs}[1]{\left\vert#1\right\vert}
\def\abs#1{\ensuremath{\lvert #1\rvert}}
\begin{document}
\title[Non-nilpotent graphs of semigroups]{The  Non-Nilpotent Graph of a Semigroup}
\author{E. Jespers and M.H. Shahzamanian}
\address{Department of Mathematics,
Vrije Universiteit Brussel,  Pleinlaan 2, 1050 Brussel, Belgium}
\email{efjesper@vub.ac.be, m.h.shahzamanian@vub.ac.be}
\thanks{ 2010
Mathematics Subject Classification. Primary 20M07, 20M99, 05C25,
Secondary: 20F18.
 Keywords and
phrases: semigroup, nilpotent, graph.
\\Research partially supported by
Onderzoeksraad of Vrije Universiteit Brussel, Fonds voor
Wetenschappelijk Onderzoek (Belgium). }

\begin{abstract}
We associate a graph ${\mathcal N}_{S}$ with a semigroup $S$
(called the upper non-nilpotent graph of $S$). The vertices of
this graph are the elements of $S$ and two  vertices  are adjacent
if they generate a semigroup that is not nilpotent (in the sense
of Malcev). In case $S$ is a group this graph has been introduced
by A. Abdollahi and M. Zarrin and some remarkable properties have
been proved. The aim of this paper is to study this graph (and
some related graphs, such as the non-commuting graph) and to
discover the algebraic structure of $S$ determined by the
associated graph. It is shown that if a finite semigroup $S$ has
empty upper non-nilpotent graph then $S$ is positively Engel. On
the other hand, a semigroup has a complete upper non-nilpotent
graph if and only if it is a completely simple semigroup that is a
band. One of the main results states that if all connected
${\mathcal N}_{S}$-components of a semigroup $S$ are complete
(with at least two elements) then  $S$ is a band that is a
semilattice of its connected components and, moreover, $S$ is an
iterated total ideal extension of its connected components. We
also show that some graphs, such as a cycle $C_{n}$ on $n$
vertices (with $n\geq 5$), are not  the upper non-nilpotent
graph of a semigroup.  Also, there is  precisely one graph on $4$
vertices that is not the upper non-nilpotent graph of a
semigroup with $4$ elements. This work also is a continuation of
earlier work by Okni\'{n}ski, Riley and the first named author on
(Malcev) nilpotent semigroups.
\end{abstract}

\maketitle

\section{Introduction}\label{pre}

Malcev \cite{malcev} and independently Neumann and Taylor
\cite{neu-tay} have shown that nilpotent groups can be defined by
using semigroup identities (that is, without using inverses). This
leads to the notion of a nilpotent semigroup (in the
sense of Malcev). It was shown in \cite{malcev,neu-tay} (see also
\cite{Okninski}) that a cancellative semigroup $S$ is nilpotent of
class $n$ if and only if $S$ has a group of fractions which is
nilpotent (in the classical sense) of class $n$. Properties of
nilpotent semigroups have been studied by Lallement in \cite{lal},
in particular he investigated the residual finiteness of finitely
generated nilpotent regular semigroups (extending Hall's result on
nilpotent groups). Meleshkin in \cite{mel} showed that free
nilpotent semigroups are cancellative and Grigorchuk \cite{gri}
showed that a finitely generated cancellative semigroup $S$ has
finite Gelfand-Kirillov dimension (or equivalently, its semigroup
algebra $K[S]$ over a field $K$ has finite Gelfand-Kirillov
dimension) if and only if $S$ is almost nilpotent (hence extending
a celebrated result of Gromov). So in particular, finitely
generated semigroup algebras of nilpotent semigroups have finite
Gelfand-Kirillov dimension.

In \cite{Eric} Jespers and Okninski studied the prime images of
semigroup algebras $K[S]$ of nilpotent semigroups $S$. It is shown
that there is a close relationship with prime images of group
algebras of nilpotent groups. The latter groups are closely
related to the image of $S$ in the prime images of $K[S]$. Further
the prime radical of $K[S]$ and the congruence it determines on
$S$ are described. Also a full description of nilpotent semigroups
of class $2$ is given. It turns out that one obtains a complete
analogue situation of the commutative case. In
\cite{jespers-okninski99} it is described when the contracted
semigroup algebra of a Malcev nilpotent semigroup is a prime
Noetherian maximal order.

Jespers and Riley in \cite{Riley} continued the investigations of
nilpotent semigroups within the class of linear semigroups, i.e.
subsemigroups of the multiplicative semigroup of all $n$-by-$n$
matrices over a field. For example, it is shown that the
nilpotence of a  linear semigroup can be characterized by a
$4$-generator semigroup condition, called the weakly Malcev
nilpotent conditions (WMN). This can be considered as some kind of
Engel's theorem for semigroups. Recall that Engel's famous theorem
in Lie theory gives a certain 2-generator criterion for the global
nilpotence of a finite-dimensional Lie algebra. It is also shown
that a finitely generated residually finite group is nilpotent if
and only if it is weakly Malcev nilpotent. Various other types of
local and global nilpotence conditions are studied in
\cite{Riley}, such as being positively Engel (PE) and Thue-Morse
(TM). In each case necessary and sufficient conditions are proved
for a linear semigroup to be of such a nilpotence type.

In the past twenty years fascinating questions and results have
been raised and investigated by studying graphs associated to
groups or rings (see for example
\cite{abd-zar,abd,akbari2,anderson,beck,bertram,Paul
Erdos,segev,williams}). In \cite{abd-zar} the notion of a
non-nilpotent graph $N_{G}$ of a group $G$ is introduced. The
vertices of the graph are the elements of $G$ and there is an edge
between vertices if they do not generate a nilpotent group. The
authors studied the group $G$ by the information that is stored in
this graph. Note that if the graph is empty (i.e. there are no
edges) then every two-generated subgroup is nilpotent. In
this case, if the group $G$ also is finite, then it is well known
that $G$ is a nilpotent group. One of the results proved in
\cite{abd-zar}  is that the number of connected components of
$N_G$ for a finite group $G$ is either $\abs{Z^{*}(G)}$ or
$\abs{Z^{*}(G)}+1$, where $Z^{*}(G)$ denotes the hypercenter of
$G$. Note the elements of $Z^{*}(G)$ are isolated points in the
graph $N_{G}$. Hence, in \cite{abd-zar},  one also studied the
induced subgraph $\mathfrak{N}_G$ on $G\backslash \mbox{nil} (G)$,
where $\mbox{nil} (G)$ is the subset of those elements $g\in G$
such that the group generated by $g$ and $h$ is nilpotent for any
$h\in H$. In general it is unknown whether $\mbox{nil} (G)$ is a
subgroup of G, but in many important cases it is. For example,
$\mbox{nil} (G)=Z^{*}(G)$ if $G$ is a finite group (or more
general, if
 G satisfies the maximal condition on its
subgroups) or if $G$ is a finitely generated solvable group.

The aim of this paper is  to  study graphs associated  to a
semigroup and to discover the algebraic structure of the semigroup
determined by its associated graph. The graphs of interest are
those determined by the (non) nilpotence of two-generated
subsemigroups. We mainly use notations as in \cite{cliford}.
Before stating our contributions, we first recall some
definitions.

For a semigroup $S$ with elements $x,y,z_{1},z_{2}, \ldots$
one recursively defines two sequences
$$\lambda_n=\lambda_{n}(x,y,z_{1},\cdots, z_{n})\quad{\rm and}
\quad \rho_n=\rho_{n}(x,y,z_{1},\cdots, z_{n})$$ by
$$\lambda_{0}=x, \quad \rho_{0}=y$$ and
$$\lambda_{n+1}=\lambda_{n} z_{n+1} \rho_{n}, \quad \rho_{n+1}
=\rho_{n} z_{n+1} \lambda_{n}.$$ A semigroup is said to be
\textit{nilpotent} (in the sense of Malcev \cite{malcev}) if there
exists a positive integer $n$ such that
$$\lambda_{n}(a,b,c_{1},\cdots, c_{n}) = \rho_{n}(a,b,c_{1},
\cdots, c_{n})$$ for all $a,b$ in $S$ and $ c_{1}, \cdots , c_{n}$
in $S^{1}$. The smallest such $n$ is called the nilpotency class
of $S$. Clearly, null semigroups are nilpotent. As mentioned
before (see for example \cite{Okninski}), a group $G$ is nilpotent
of  class $n$ if and only if it is nilpotent of class $n$ in the
classical sense. In \cite{Eric} it is proved that a completely
($0$-)simple  semigroup $S$  is nilpotent if and only if $S$ is an
inverse semigroup with nilpotent maximal subgroup $G$. Recall
(\cite{neu-tay}) that a semigroup $S$ is said to be
\textit{Neumann-Taylor} (NT)  if, for some $n\geq 2$,
$$\lambda_n(a,b,1,c_2,\cdots,c_n)=\rho_n(a,b,1,c_2,\cdots,c_n)$$
for all $a,b \in S$ and $c_2,\cdots,c_n$ in $S^1$. A semigroup $S$
is said to be \textit{positively Engel} (PE) if, for  some $n\geq
2$,
$$\lambda_{n}(a,b,1,1,c,c^{2},\cdots ,c^{n-2})
=\rho_{n} (a,b,1,1,c,c^{2},\cdots ,c^{n-2})$$ for all $a,b$ in $S$
and $c\in S^{1}$.

Recall (\cite{cliford}) that a semigroup $S$ is a completely
$0$-simple semigroup if and only if it is isomorphic with a
regular Rees matrix semigroup over a group  with zero, say $G^0$.
The group $G$ is a maximal subgroup of $S$. The standard notation
for such a semigroup $S$ is $\mathcal{M}^0(G,I,\Lambda;P)$, where
$I$ and $\Lambda$ are non-empty sets and $P$ is an $\Lambda \times
I$ matrix with entries in $G^0 = G \cup \{\theta\}$ (the latter is
the group $G$ adjoined with a zero element$~\theta$). The elements
of $\mathcal{M}^0(G,I,\Lambda;P)$ will be denoted as $g_{ij}$,
where $g \in G^0$, $i \in I$ and $j \in \Lambda$. Note that all
elements$~\theta_{ij}$, with $i\in I,\;  j \in \Lambda$, are
identified with the zero element of
$\mathcal{M}^0(G,I,\Lambda;P)$,  also denoted by $\theta$.  If $P$
contains no zero entry then $\mathcal{M}^0(G,I,\Lambda;P)
\backslash \{\theta\}$ is a completely simple semigroup which is
denoted as $\mathcal{M}(G,I,\Lambda;P)$.

As in \cite{Riley}, we denote by $F_{7}$ the semigroup which is
the disjoint union of the completely $0$-simple semigroup
$\mathcal{M}^0(\{e\},2,2;I_{2})$ and the cyclic group $\{1,u\}$
of order $2$:
\begin{eqnarray} \label{deff7}
F_{7} &=& \mathcal{M}^0(\{e\},2,2;I_{2}) \cup \{1,u\}
,\end{eqnarray}
 where $I_{2}$ denotes the
identity $2$-by-$2$ matrix.  The multiplication on $F_{7}$ is
defined by extending that of the defining subsemigroups via
$1s=s1=s$ for all $s \in \mathcal{M}^0(\{e\},2,2;I_{2})$, and
$e_{11}u=ue_{22}=e_{12}$, $e_{22}u=ue_{11}=e_{21}$. Note that
$$F_{7}=\langle u,e_{11}\rangle .$$ In
\cite{Riley} it is proved that a finite semigroup $S$  is
positively Engel if and only if all non-null  principal factors of
$S$ are inverse semigroups whose maximal subgroups are nilpotent
groups and $S$ does not have an epimorphic image that has $F_{7}$
as a subsemigroup.

Throughout the paper we will make frequently use of the above
mentioned results, without specific reference.

In order to study (local) nilpotence or commutativity of
semigroups, we define three types of graphs on a semigroup: the
upper non-nilpotent graph ${\mathcal N}_S$, the lower
non-nilpotent graph ${\mathcal L}_S$, and the non-commuting  graph
${\mathcal M}_S$. In general, these graphs are  different,
however, we show that if any of these graphs is complete then so
are the others. Moreover, in this case, it turns out that the
semigroup $S$ is completely simple. We investigate which graphs
can not show up as an upper non-nilpotent graph ${\mathcal N}_{S}$
for some semigroup $S$. In case $\abs{S}< 5$ then there is
only one such graph. Our main results focus on the two extreme
cases: (1) ${\mathcal N}_{S}$ is empty, that is every
two-generated subsemigroup is nilpotent, and (2) the connected
components are complete. The main results are the following:
\begin{enumerate}
\item If  $S$ is a semigroup, then ${\mathcal N}_{S}$ is complete
if and only if $S$ is a completely simple semigroup that is a
band. Moreover,  ${\mathcal N}_S$ is complete if and only if
${\mathcal L}_S$ (or equivalently ${\mathcal M}_S$) is complete.
Furthermore, for a finite semigroup $S$ of prime order, the graph
${\mathcal L}_{S}$ is connected if and only if it is complete.

\item If $S$ is a semigroup such that  all connected components of
${\mathcal N}_{S}$ are complete and have at least two elements
then $S$ is a band and  a semilattice of its connected components,
in particular $S$ is semisimple. Moreover, if $S_{i}$ and $S_{j}$
are distinct connected components, then $S_{i}\cup S_{j}$ is a
trivial total  ideal extension of $S_{i}$ by $S_{j}$, or
vice-versa, or $\abs{S_{i}S_{j}}=1$.

\item If $S$ is a finite semigroup such that $N_{S}$ is the empty
graph, then $S$ is positively Engel.

\item If $X$ is a graph with $4$ vertices, then $X$ is the upper
non-nilpotent graph of a semigroup if and only if $X\neq P_{4}$ (a
path on 4 vertices).

\item If $n\geq 5$ then $C_n$ (a graph which is a cycle on $n$
vertices) is not the upper non-nilpotent graph of any
semigroup.
\end{enumerate}


\section{Non-nilpotent graphs}\label{non-nil}

We begin by defining the graphs of interest on a semigroup $S$.

\begin{defn} \label{non-nilpotent-graph}
Let $S$ be a semigroup. The \textit{upper non-nilpotent graph}
${\mathcal N}_S$ of a semigroup $S$ is the graph whose vertices
are the elements of $S$ and in which there is an edge between two
distinct vertices $x$ and $y$ if and only if the subsemigroup
$\langle x, y\rangle$ generated by $x$ and $y$ is not a nilpotent
semigroup.
\end{defn}

The following lemma gives a criterion for a finite semigroup not
to be nilpotent.

\begin{lem} \label{finite-nilpotent}
A finite semigroup S is not nilpotent if and only if there exists
a positive integer $m$,  elements $x, y\in S$ and $w_{1}, w_{2},
\cdots, w_{m}\in S^{1}$ such that $x = \lambda_{m}(x, y,
w_{1}, w_{2}$ $, \cdots, w_{m})$, $y = \rho_{m}(x, y, w_{1},
w_{2}, \cdots, w_{m})$ and $x\neq y$ (note that for the converse
one does not need that $S$ is finite).
\end{lem}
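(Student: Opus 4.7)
The plan is to encode the problem as a reachability question in a finite directed graph on pairs. Define an edge-labeled directed graph $\Gamma$ with vertex set $S\times S$ and, for each $c \in S^1$, an edge labeled $c$ from $(a,b)$ to $(acb,\, bca)$. By an easy induction on the recursive definition, a walk of length $n$ in $\Gamma$ starting at $(x,y)$ with label sequence $z_1,\ldots,z_n$ passes through the states $\bigl(\lambda_k(x,y,z_1,\ldots,z_k),\, \rho_k(x,y,z_1,\ldots,z_k)\bigr)$ for $k=0,1,\ldots,n$. Writing $D=\{(a,a): a\in S\}$ and $U=(S\times S)\setminus D$, nilpotence of $S$ of class $n$ is exactly the assertion that every walk of length $n$ terminates in $D$, while the conclusion we want to produce is a closed walk in $\Gamma$ based at some vertex $(x,y)\in U$.

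For the converse (not requiring finiteness) assume $(x,y)\in U$ and labels $w_1,\ldots,w_m$ form a closed walk $(x,y)\to(x,y)$. Extend the label sequence periodically; since $\Gamma$'s transition depends only on the current state and the next label, the walk returns to $(x,y)$ after $km$ steps for every $k\geq 1$. Thus $\lambda_{km}=x\neq y=\rho_{km}$ for this particular choice. To rule out nilpotency at smaller $n$, I would note that nilpotency of class $\leq n$ propagates upward: if $\lambda_n\equiv\rho_n$ identically, set $a=\lambda_n=\rho_n$ and observe $\lambda_{n+1}=a\,z_{n+1}\,a=\rho_{n+1}$, so taking $k$ with $km\geq n$ yields a contradiction.

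The forward direction (with $S$ finite) rests on a single structural fact: the set $D$ is forward-closed in $\Gamma$, since any edge out of $(a,a)$ lands in $(aca,aca)$. Consequently any walk whose final vertex lies in $U$ lies entirely in $U$. Non-nilpotence provides, for every $n$, a walk of length $n$ ending in $U$ and hence a walk of length $n$ in $\Gamma|_U$. Because $|U|\leq|S|^2$ is finite, once $n>|U|$ such a walk must revisit a vertex $v=(x,y)\in U$; the arc between the two visits is a closed walk based at $v$. Reading off its labels $w_1,\ldots,w_m$ and using the dictionary from the first paragraph yields $x=\lambda_m(x,y,w_1,\ldots,w_m)$, $y=\rho_m(x,y,w_1,\ldots,w_m)$ with $x\neq y$, as required.

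The only conceptual step that needs care is the forward-closedness of $D$, which is what forces the whole intermediate walk to stay off the diagonal and lets pigeonhole on $U$ produce a genuine cycle; everything else is routine unwinding of the recursion defining $\lambda_n,\rho_n$.
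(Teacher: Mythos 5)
Your proof is correct and is essentially the paper's argument in graph-theoretic clothing: both apply the pigeonhole principle to the sequence of pairs $(\lambda_i,\rho_i)$ in $S\times S$ along a word of length greater than $\abs{S}^2$ witnessing non-nilpotence, and extract the closed segment between two repeated pairs. Your explicit observation that the diagonal is forward-closed (which is what guarantees $x\neq y$ and also handles the converse) is a point the paper leaves implicit, but the underlying approach is the same.
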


\begin{proof}
Let $k=\abs{S}$. If $S$ is not nilpotent then there exist elements
$a, b\in S$ and  some $w_{1}, \cdots, w_{k^2+1}$ $ \in S^1$ such
that
$$\lambda_{k^2+1}(a,b,w_{1},\cdots, w_{k^2+1})
\neq \rho_{k^2+1}(a,b,w_{1}, \cdots, w_{k^2+1}).$$ Since
$\abs{S}^2 =k^2$ there exist positive integers $t$ and $r
\leq k^2+1$, $t< r$ with
 \begin{eqnarray*}
 \lefteqn{
(\lambda_{t}(a,b,w_{1},\cdots, w_{t}),
 \rho_{t}(a,b,w_{1},\cdots, w_{t})) } \\&=&
(\lambda_{r}(a,b,w_{1}, \cdots, w_{r}), \rho_{r}(a,b,w_{1},\cdots,
w_{r})).
 \end{eqnarray*}
Put $x= \lambda_{t}(a,b,w_{1}, \cdots, w_{t})$, $y=
\rho_{t}(a,b,w_{1},\cdots, w_{t})$ and $m=r-t$. Then $x =
\lambda_{m}(x, y, w_{t+1}, \cdots, w_{t+m}) \neq y = \rho_{m}(x,
y, w_{t+1}, \cdots, w_{t+m})$. This proves the necessity of the
stated condition. That this condition is sufficient is obvious.
\end{proof}

The lemma naturally leads us to the another graph on a
semigroup.

\begin{defn} The \textit{lower
non-nilpotent graph} ${\mathcal L}_{S}$ of a semigroup $S$ is the
graph whose vertices are the elements of $S$ and there is an edge
between two distinct vertices $x,y\in S$ if and only if there
exist  finitely many  elements  $w_{1}, w_{2}, \cdots, w_{n}$ in
$\langle x, y \rangle^{1}$ such that $x= \lambda_{n}(x, y, w_{1},
w_{2}, \cdots, w_{n})$ and $y= \rho_{n}(x, y, w_{1},$ $ w_{2},
\cdots, w_{n})$.
\end{defn}

Clearly ${\mathcal L}_{S}$ is a subgraph of ${\mathcal N}_{S}$. In
general these graphs are different. Indeed $F_{7}=\langle
u,e_{11}\rangle $ (see (\ref{deff7})) is not positively Engel, and
thus not nilpotent. So there is an edge between $e_{11}$ and $u$
and thus ${\mathcal N}_{F_{7}}$ is not empty. Since
$\mathcal{M}^0(\{e\},2,2;I_{2})$ and the cyclic group $\{1,u\}$
are nilpotent, there are no edges between elements of these
semigroups in their respective lower non-nilpotent graphs.
Further, because $\mathcal{M}^0(\{e\},2,2;I_{2})$ is an ideal of
$F_{7}$ it is impossible that for some positive integer $n$ one
has that $u= \lambda_{n}(u,x,w_1, \cdots, w_n)$ for some $x \in
\mathcal{M}^0(\{e\},2,2;I_{2})$ and some $w_1, \cdots, w_n \in
F_{7}$. Consequently, there is no edge in the graph ${\mathcal
L}_{F_{7}}$ between $u$ and any element of
$\mathcal{M}^0(\{e\},2,2;I_{2})$. Similarly, there is no edge
between $1$ and any element of $\mathcal{M}^0(\{e\},2,2;I_{2})$ in
${\mathcal L}_{F_{7}}$. Hence ${\mathcal L}_{F_{7}}$ is empty.

We now define a third graph; it contains ${\mathcal N}_{S}$ as a
subgraph.

\begin{defn}
The non-commuting  graph ${\mathcal M}_S$ of a semigroup  $S$ is
the graph whose vertices are the elements of $S$ and in which
there is an edge between two distinct vertices $x$ and $y$ if
these elements do not commute.
\end{defn}

\begin{lem} \label{empty-graph-TM}
Let $S$ be a semigroup. Assume ${\mathcal N}_S$ is an empty graph.
The following properties hold.
\begin{enumerate}
\item Assume $m$ is a positive integer, $x, y \in S$ and $w \in
S^{1}$. If $x = \lambda_{m}(x, y, w,$ $w, \cdots, w)$ and $y=
\rho_{m}(x, y, w, w, \cdots, w)$, then $x= y$. \item If an element
of $S$  has an inverse, then this inverse is unique.
\end{enumerate}
\end{lem}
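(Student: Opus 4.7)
The plan is to convert the given fixed-point relation into one that lives inside $2$-generated (hence nilpotent) subsemigroups of $S$ and then exploit the alternating form of $\lambda_m$ and $\rho_m$. First I would prove by induction on $m$ the absorbing identities
\[\lambda_m(x,y,w,\ldots,w)\,w = \lambda_m(xw,yw,1,\ldots,1) \quad\text{and}\quad w\,\lambda_m(x,y,w,\ldots,w) = \lambda_m(wx,wy,1,\ldots,1),\]
together with the $\rho$-analogues; each inductive step is immediate from the recursion $\lambda_{m+1}=\lambda_m z_{m+1}\rho_m$. Substituting the hypothesis $\lambda_m(x,y,w,\ldots,w)=x$ and $\rho_m(x,y,w,\ldots,w)=y$ into the first identity yields $xw=\lambda_m(xw,yw,1,\ldots,1)$ and $yw=\rho_m(xw,yw,1,\ldots,1)$. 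Since $\mathcal{N}_S$ is empty, the subsemigroup $\langle xw,yw\rangle$ is nilpotent, so the converse direction of Lemma~\ref{finite-nilpotent} (which does not require finiteness) forces $xw=yw$. The same argument applied to the nilpotent subsemigroup $\langle wx,wy\rangle$ yields $wx=wy$.

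Next, setting $p=xw=yw$, $q=wx=wy$, and $N=2^m-1$, I would observe that $\lambda_m(x,y,w,\ldots,w)$ is an alternating word of length $2^{m+1}-1$ whose $2^m$ odd-position letters lie in $\{x,y\}$ (starting with $x$) and whose $N$ even-position letters are all~$w$. Grouping adjacent pairs from the left via $xw=yw=p$ gives $\lambda_m = p^N\cdot a$, where $a\in\{x,y\}$ is the last letter of $\lambda_m$, while grouping adjacent pairs from the right via $wx=wy=q$ gives $\lambda_m = x\cdot q^N$. The hypothesis $\lambda_m = x$ then produces $xq^N=x$, and the computations
\[p^N x = (xw)^N x = x(wx)^N = xq^N = x, \qquad p^N y = (xw)^{N-1}\,x\,(wy) = xq^{N-1}q = xq^N = x\]
show that $p^N\cdot a = x$ regardless of which element of $\{x,y\}$ the letter $a$ is. Applying the same left-grouping to $\rho_m$ gives $\rho_m = p^N\cdot b$ for $b\in\{x,y\}$ the last letter of $\rho_m$; since $\rho_m = y$, this forces $y = p^N b = x$, proving~(1).

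For (2), assume $b$ and $c$ are both inverses of $a$, so that $aba=a$, $aca=a$, $bab=b$ and $cac=c$. A direct reduction using these four relations gives
\[\lambda_2(b,c,a,a) = (bac)\,a\,(cab) = b(aca)cab = bacab = b(aca)b = bab = b,\]
and symmetrically $\rho_2(b,c,a,a) = (cab)\,a\,(bac) = c(aba)bac = cabac = c(aba)c = cac = c$. Applying part~(1) with $x=b$, $y=c$, $w=a$, $m=2$ then forces $b=c$. The main obstacle is the absorbing identity and the accompanying Lemma~\ref{finite-nilpotent} argument producing both $xw=yw$ and $wx=wy$: once these are in hand, the two decompositions $\lambda_m = p^N a$ and $\lambda_m = xq^N$ become compatible and the desired conclusion $x=y$ drops out of an elementary associativity calculation.
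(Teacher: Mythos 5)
Your proof is correct, and it shares its opening move with the paper but finishes by a genuinely different route. The first half --- right- and left-multiplying the fixed-point relations by $w$ to get $xw=\lambda_m(xw,yw,1,\ldots,1)$, $yw=\rho_m(xw,yw,1,\ldots,1)$ (and the left-hand analogues), and then invoking the emptiness of ${\mathcal N}_S$ on the two-generated subsemigroups $\langle xw,yw\rangle$ and $\langle wx,wy\rangle$ via the converse of Lemma~\ref{finite-nilpotent} to force $xw=yw$ and $wx=wy$ --- is exactly the paper's argument. Where you diverge is the conclusion. The paper proceeds by contradiction: it applies the converse of Lemma~\ref{finite-nilpotent} once more to conclude that $\langle x,y,w\rangle$ is not nilpotent, and uses the relations $yw=xw$, $wy=wx$ only far enough to see that $y=\rho_m(x,y,w,\ldots,w)\in\langle x,w\rangle$, whence $\langle x,y,w\rangle=\langle x,w\rangle$ is two-generated and therefore nilpotent, a contradiction. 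You instead push the rewriting to completion: factoring the alternating words into blocks gives $\rho_m=p^Nb$ and $\lambda_m=xq^N$ with $p=xw=yw$, $q=wx=wy$, and the identity $xq^N=x$ extracted from $\lambda_m=x$ shows $p^Nb=x$ whichever letter $b\in\{x,y\}$ closes the word, so $y=\rho_m=x$ directly. Your finish is a purely equational, direct argument that dispenses with the second appeal to the non-nilpotence criterion, at the cost of slightly more careful bookkeeping of the word structure of $\lambda_m$ and $\rho_m$ (all of which I have checked and which is correct). Part (2) is identical to the paper's proof: the same identities $b=\lambda_2(b,c,a,a)$ and $c=\rho_2(b,c,a,a)$ followed by an application of part (1).
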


\begin{proof} (1)
Assume $x$ and $y$ are different elements of $S$ and $w \in S^1$
such that $x = \lambda_{m}(x, y, w, \cdots, w)$ and $y=
\rho_{m}(x, y, w, \cdots, w)$ for some $m$. Then,  we have $xw =
\lambda_{m}(xw, yw, 1, 1, \cdots, 1)$ and $yw= \rho_{m}(xw, yw, 1,
1, \cdots, 1)$. As, by assumption  ${\mathcal N}_{S}$ is empty, we
get that $xw=yw$. Because of Lemma~\ref{finite-nilpotent}, we also
get that $\langle x,y,w\rangle$ is not a nilpotent semigroup.
Similarly, $wx= wy$. The equalities $wy=wx$, and $yw=xw$ and $y=
\rho_{m}(x, y, w, w, \cdots, w)$ imply that $y \in \langle x,
w\rangle$  and thus $\langle x, y, w\rangle = \langle x,
w\rangle$. Hence $\langle x, w\rangle$ is not a nilpotent
semigroup, a contradiction, because ${\mathcal N}_S$ is an empty
graph.

(2) Let $a\in S$. Suppose $b$ and $c$ are inverses of $a$ in $S$.
Then  it is easily verified that $b=bacacab$ and $c=cababac$.
Hence $b = \lambda_{2}(b, c, a, a)$ and $c = \rho_{2}(b, c, a,
a)$. So, by part (1),  $b = c$. Therefore $a$ has at most one
inverse, as desired.
\end{proof}

Clearly the lemma implies that every regular semigroup in which
every two-generated semigroup is nilpotent is an  inverse
semigroup.

\begin{thm} \label{empty-graph-Engel}
Let $S$ be a finite semigroup. If every two-generated
subsemigroup of $S$ is nilpotent (i.e. ${\mathcal N}_{S}$ is
empty) then $S$ is positively Engel.
\end{thm}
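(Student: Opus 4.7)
The plan is to invoke the characterization recalled above from~\cite{Riley}: a finite semigroup is positively Engel if and only if each of its non-null principal factors is an inverse semigroup with a nilpotent maximal subgroup, and it has no epimorphic image containing $F_{7}$ as a subsemigroup. Under the hypothesis that ${\mathcal N}_{S}$ is empty, I would verify these two conditions in turn.

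For the first, let $J$ be a non-null ${\mathcal J}$-class of the finite semigroup $S$; the principal factor $S_{J}$ is then completely $0$-simple, hence regular. To see that $S_{J}$ is inverse, suppose $b,c\in J$ are two inverses of $a\in J$ in $S_{J}$. The defining relations $aba=a$, $bab=b$, $aca=a$, $cac=c$ hold in $S_{J}$, and since all the products involved are nonzero, they coincide with the corresponding products taken in $S$. Thus $b$ and $c$ are both inverses of $a$ in $S$, and Lemma~\ref{empty-graph-TM}(2) yields $b=c$. For the maximal subgroup, fix an idempotent $e\in J$ and pick $g,h\in H_{e}$. The subsemigroup $\langle g,h\rangle\subseteq H_{e}$ is, by hypothesis, Malcev nilpotent, and being a finite subsemigroup of a group it is a subgroup, hence nilpotent in the classical sense. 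Thus every two-generated subgroup of the finite group $H_{e}$ is nilpotent, so $H_{e}$ itself is nilpotent by the theorem of Abdollahi--Zarrin~\cite{abd-zar}.

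For the second, suppose for contradiction that $\phi\colon S\to T$ is an epimorphism such that $F_{7}$ embeds in $T$ as a subsemigroup. Since $F_{7}=\langle u,e_{11}\rangle$, pick preimages $x,y\in S$ with $\phi(x)=u$ and $\phi(y)=e_{11}$. The hypothesis on ${\mathcal N}_{S}$ forces $\langle x,y\rangle$ to be nilpotent, and since Malcev nilpotency is defined by an identity it is preserved by $\phi$, so $\phi(\langle x,y\rangle)=\langle u,e_{11}\rangle=F_{7}$ is nilpotent too. But $F_{7}$ has itself as an epimorphic image containing $F_{7}$ as a subsemigroup, so by the Riley characterization applied to $F_{7}$ it cannot be positively Engel, let alone nilpotent, a contradiction. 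The most delicate point, which I see as the main obstacle, is the observation that inverses computed inside a principal factor are genuine inverses in the ambient semigroup $S$; once this is in hand, Lemma~\ref{empty-graph-TM}(2) closes the argument and the remaining pieces fit together without trouble.
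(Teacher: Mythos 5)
Your proof is correct and follows essentially the same route as the paper's: both verify the two conditions of the Jespers--Riley characterization of positively Engel finite semigroups, using Lemma~\ref{empty-graph-TM}(2) for the inverse-semigroup condition, the classical fact that a finite group all of whose two-generated subgroups are nilpotent is nilpotent, and the two-generatedness of $F_{7}$ together with the preservation of Malcev nilpotency under epimorphisms. The only (harmless) difference is that you lift inverses from the principal factor back to $S$ before applying Lemma~\ref{empty-graph-TM}(2), whereas the paper applies that lemma to the principal factor itself after noting that its upper non-nilpotent graph is also empty.
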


\begin{proof} Assume ${\mathcal N}_{S}$ is an
empty graph. Each principal factor of   $S$ is either completely
$0$-simple, completely simple or null. Every completely $0$-simple
factor is isomorphic with a regular Rees matrix semigroup $S'$
over a finite (maximal) subgroup $G$. Clearly, also ${\mathcal
N}_{S'}$ and ${\mathcal N}_{G}$ are empty graphs. Because of the
former, Lemma~\ref{empty-graph-TM} yields that $S'$ is an inverse
semigroup. The latter implies that every two-generated
subgroup of $G$ is nilpotent and thus the finite group $G$ is
nilpotent. Hence, $S'$ is a nilpotent semigroup  (see the
introduction). Therefore, every non-null principal factor of $S$
is an inverse semigroup with maximal subgroup a nilpotent group.

Furthermore, we know that the two-generated semigroup $F_{7}$
is not nilpotent. As ${\mathcal N}_{S}$ is empty, it therefore
follows that $F_{7}$ is not an epimorphic image of any
subsemigroup of $S$. Therefore, $S$  is positively Engel.
\end{proof}

It is well known that finite groups are positively Engel if and
only if they are nilpotent \cite{Riley}. However, in \cite{Riley},
it is shown that such a result is no longer true for finite
semigroups. Indeed an example is given of a finite semigroup that
is positively Engel but is  not nilpotent. We now give another
example: a finite semigroup with empty upper non-nilpotent graph
(and thus it is positively Engel)  but it  is not nilpotent.

Let $S$ be the semigroup that is the  disjoint union
$\mathcal{M}^0(\{e\}, 4, 4; I_4) \cup \{w, v\}$, where
$S'=\mathcal{M}^0(\{e\}, 4, 4; I_4)$ is a completely $0$-simple
subsemigroup of $S$ that is an ideal of $S$ and  such that the
following relations are satisfied: $w^2=v^2=wv=vw=\theta$,
$e_{11}w=e_{14}$, $e_{22}w=e_{23}$, $e_{33}w=\theta=e_{33}v$,
$e_{44} w=\theta=e_{44}v$, $e_{11}v=e_{13}$, $e_{22} v=e_{24}$,
$we_{11}=\theta=ve_{11}$, $we_{22}=\theta=ve_{22}$,
$we_{33}=e_{23}$, $we_{44}=e_{14}$, $ve_{33}=e_{13}$ and
$ve_{44}=e_{24}$.  We have $e_{31}=\lambda_2(e_{31},e_{42},w,v)$
and $e_{42}=\rho_2(e_{31},e_{42},w,v)$. Hence, by
Lemma~\ref{finite-nilpotent}, the semigroup  $S$ is not nilpotent.
We now show that ${\mathcal N}_{S}$ is an empty graph. Since the
semigroup $S'$ is nilpotent, there are no edges  between elements
of $S'$. Because the subsemigroup $ \langle w, v \rangle$ is
commutative, there is no edge between $w$ and $v$. Suppose now
that there is an edge between $s \in S'$ and $w$. Then, by
Lemma~\ref{finite-nilpotent}, there exist  elements $s_{1}, s_{2}
\in \langle w, s \rangle$ and some elements $w_1, w_2, \cdots, w_n
\in \langle w, s \rangle^1$  such that $s_{1}= \lambda_n(s_{1} ,
s_{2} , w_1, \cdots, w_n)$, $s_{2} = \rho_n(s_{1} , s_{2} , w_1,
\cdots,$ $w_n)$ and $s_1 \neq s_2$ (note that $s_{1}\neq \theta$
and $s_{2}\neq \theta$). Since $S'$ is an ideal it follows that
$s_{1},s_{2}\in S'$. As $S'$ is a nilpotent semigroup, we
furthermore obtain that at least one of the elements $w_{i}$
$(1\leq i\leq n)$ equals$~w$. Now, if necessary, replacing $s_{1}$
by $\lambda_{i-1} (s_{1},s_{2},w_{1},\cdots , w_{i-1})$ and
$s_{2}$ by $\rho_{i-1}(s_{1},s_{2},w_{1},\cdots , w_{i-1})$, we
may assume that $w=w_{1}$.  It then easily follows that $\theta
\not\in \{ s_{1} w, ws_{1} , s_{2} w, w s_{2} \}$ and thus $\{
s_{1}, s_{2} \} \subset \{e_{31},e_{32},e_{41},e_{42}\}$. Also,
$\theta\not\in \{ s_{1}ws_{2}, s_{2}ws_{1}\}$ and thus  $\{ s_{1}
, s_{2} \} = \{e_{31}, e_{42}\}$. Suppose $s_{1}=e_{31}$ and
$s_{2}=e_{42}$. Then $\lambda_{1}(s_{1},s_{2},w)= e_{32}$ and
$\rho_{1}(s_{1},s_{2},w)= e_{41}$. Thus, $\theta\neq
\lambda_{2}(s_{1},s_{2},w,w_{2}) =e_{32}w_{2}e_{41}$ and
$\theta\neq \rho_{2}(s_{1},s_{2},w,w_{2})=e_{41}we_{32}$. Hence,
$w_{2}\not\in S'$ and thus $w_{2}=w$. But then $\theta\neq
e_{32}w_{2}e_{41}=e_{32}we_{41}=e_{33}e_{41}=\theta$, a
contradiction. Similarly one shows that $s_{1}=e_{42}$ and
$s_{2}=e_{31}$ leads to a contradiction. Hence we have shown that
there are no edges between $w$ and any elements of $S'$.
Similarly, there are no edges between $v$ and elements of $S'$.
So, indeed, ${\mathcal N}_S$ is an empty graph.

One can improve the example a little in the sense that there
exists a finite semigroup $T$ with empty upper non-nilpotent graph
but $S$  is not Neumann-Taylor. One can take for $T$ the previous
example $S$ with an element $q$ added  such that $S$ is a
subsemigroup of $T$ and  an ideal of $T$ and, moreover, it
satisfies the relations $q^{2}=\theta$,
$e_{11}q=e_{33}q=qe_{22}=qe_{44}=\theta$, $e_{22}q = e_{21}$,
$e_{44}q=e_{43}$, $qe_{11}=e_{21}$, $qe_{33}=e_{43}$, $wq=e_{13}$,
$vq=e_{23}$, $qw=e_{24}$ and $qv=e_{23}$. We leave the
details to the reader.


\section{A description of semigroups with complete connected  components for the upper non-nilpotent graph}\label{conn}

In this section we give a description of semigroups for which all
connected components of the upper non-nilpotent graphs
are complete and contain at least two elements (so there are no
isolated vertices). To do so, we first describe the semigroups
that have complete upper non-nilpotent graphs.  It turns out
that this is equivalent with ${\mathcal L}_{S}$ (or ${\mathcal
M}_{S}$) being complete. We begin by showing that ${\mathcal
L}_{S}$ is a complete graph provided that ${\mathcal L}_{S}$ is
connected and $S$ is of prime order.

We start with a technical lemma.

\begin{lem} \label{ideal-lambda}
Let $S$ be a semigroup. The following properties hold.
\begin{enumerate}
\item If $I$ is an ideal of $S$ and  $x \in S \backslash  I$ then,
in the graph ${\mathcal L}_{S}$,  there is no edge between  $x$
and any element of $I$.

\item If an ideal $I$ of $S$ intersects non-trivially  a connected
component of ${\mathcal L}_S$ then this connected component is
contained in $I$.
\end{enumerate}
In particular, all vertices of a connected component of ${\mathcal
L}_{S}$ belong to the same $J$-class. Furthermore,  if ${\mathcal
L}_S$ is a connected graph, then $S$ is a simple semigroup.
\end{lem}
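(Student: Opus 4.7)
The plan is to prove (1) by an induction along the recursion defining $\lambda_k$ and $\rho_k$, and then to derive (2), the $J$-class assertion, and the simplicity of $S$ as essentially formal consequences. For (1), I argue by contradiction: assume $x \in S \setminus I$ and $y \in I$ are joined by an edge in $\mathcal{L}_S$, so by the definition of $\mathcal{L}_S$ there exist a positive integer $n$ and elements $w_1,\dots,w_n \in \langle x,y\rangle^1 \subseteq S^1$ with $x = \lambda_n(x,y,w_1,\dots,w_n)$ and $y = \rho_n(x,y,w_1,\dots,w_n)$. The key observation is that $\lambda_k,\rho_k \in I$ for every $k \geq 1$, proved by induction: for $k=1$ one has $\lambda_1 = x w_1 y$ and $\rho_1 = y w_1 x$, and both lie in the two-sided ideal $I$ because $y \in I$ (the case $w_1 = 1$ is handled identically); the recursion $\lambda_{k+1} = \lambda_k w_{k+1}\rho_k$, $\rho_{k+1} = \rho_k w_{k+1}\lambda_k$ then propagates ideal-membership from step $k$ to step $k+1$ at once. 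Taking $k=n$ yields $x = \lambda_n \in I$, a contradiction.

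For (2), suppose $I$ meets a connected component $C$ of $\mathcal{L}_S$ at some $y \in I \cap C$, and let $z \in C$ be arbitrary. Walking along a path $y = v_0,v_1,\dots,v_k = z$ in $C$, the contrapositive of (1) forces that whenever $v_i \in I$ the neighbour $v_{i+1}$ must also lie in $I$, and an easy induction on $i$ gives $z \in I$. From this the $J$-class statement drops out: for $s$ in a component $C$, the principal two-sided ideal $J(s) = S^1 s S^1$ meets $C$ at $s$ and therefore contains all of $C$ by (2); hence every $t \in C$ satisfies $J(t) \subseteq J(s)$, and by symmetry $J(s) = J(t)$, i.e.\ $s$ and $t$ are $J$-related.

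Finally, if $\mathcal{L}_S$ is connected then $S$ itself forms a single connected component, so by the previous step $S$ is a single $J$-class; any proper two-sided ideal would contain the entire $J$-class of any of its elements, forcing it to equal $S$, and so $S$ is simple. I do not expect any real obstacle here: the argument is a clean propagation of ideal-membership, first along the $\lambda_k,\rho_k$ recursion and then along edges of the graph. The only small points to watch are that the $w_i$ may equal the adjoined identity $1$, and that the defining $n$ in the definition of $\mathcal{L}_S$ must be taken to be at least $1$ (otherwise the condition would be vacuously satisfied by every pair of distinct vertices); both are handled transparently by the induction above.
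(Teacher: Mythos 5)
Your proposal is correct and follows essentially the same route as the paper: the paper's proof of (1) is the same contradiction via ideal-membership of $\lambda_m$ (stated there as ``easily verified''), and (2) together with the $J$-class and simplicity assertions are treated as immediate consequences, exactly as you spell them out. Your version merely makes explicit the induction along the $\lambda_k,\rho_k$ recursion and the walk along a path in the component, which the paper leaves to the reader.
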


\begin{proof} (1) Suppose $I$ is an ideal of $S$ and
$x\in S\backslash I$.  Assume there is an edge in ${\mathcal
L}_{S}$  between  $y\in I$ and $x$. Then,  $x=
\lambda_{m}(x,y,z_{1}, \cdots , z_{m})$ and
$y=\rho_{m}(x,y,z_{1},\cdots , z_{m})$ for some $z_{1},\cdots
,z_{m} \in S^1$ and some positive integer $m$. Since $I$ is an
ideal of $S$, it is easily verified that then $x=
\lambda_{m}(x,y,z_{1}, \cdots , z_{m}) \in I$, a contradiction.

(2)  This follows easily from part (1).
\end{proof}

Recall that a \textit{complete graph}  is a graph in which every
pair of distinct vertices  are connected with an edge.

\begin{prop} \label{connected-complete} Let $S$ be a semigroup. The following properties hold.
\begin{enumerate}
\item If $S$ is a completely simple semigroup that is a band, i.e.
$S\cong {\mathcal{M}} (\{ e \}, I, \Lambda;P)$, for some sets $I$
and $\Lambda$ and sandwich matrix $P$ all whose entries are
$e$, then ${\mathcal L}_{S}$ is complete.

\item If $S$ has prime order and  ${\mathcal L}_S$ connected then
${\mathcal L}_{S}$ is complete.
\end{enumerate}
\end{prop}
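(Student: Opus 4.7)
For part (1), I would exploit that the hypothesis $S \cong \mathcal{M}(\{e\}, I, \Lambda; P)$ with all entries of $P$ equal to $e$ makes $S$ a rectangular band, so both the band identity $y^{2} = y$ and the rectangular band identity $xyx = x$ hold throughout $S$. Taking $n = 2$ and $w_1 = w_2 = 1$ in the recursive definition of $\lambda_n, \rho_n$, a direct computation gives $\lambda_{2}(x,y,1,1) = (xy)(yx) = x\, y^{2}\, x = xyx = x$ and, by symmetry, $\rho_{2}(x,y,1,1) = y$. This produces an edge in $\mathcal{L}_S$ between any two distinct vertices $x, y$, so $\mathcal{L}_S$ is complete.

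For part (2), the first step is to apply Lemma~\ref{ideal-lambda}: since $\mathcal{L}_S$ is connected, $S$ must be a simple semigroup. Being finite, $S$ is then completely simple, so by the Rees structure theorem $S \cong \mathcal{M}(G,I,\Lambda;P)$ for some group $G$ and index sets $I, \Lambda$. The prime-order hypothesis $|G|\cdot|I|\cdot|\Lambda| = p$ forces exactly one of the three factors to equal $p$ and the other two to equal $1$.

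The remainder is a short three-case analysis. If $|G| = p$ and $|I| = |\Lambda| = 1$, then $S$ is a cyclic group of prime order, hence abelian; therefore $\lambda_n$ and $\rho_n$ are products of the same letters in the same multiplicities and always evaluate to the same element, so $\mathcal{L}_S$ has no edges at all, contradicting connectedness as $p \geq 2$. In each of the two remaining subcases, $|I| = p$ or $|\Lambda| = p$, the group $G$ is trivial and every entry of $P$ equals $e$, so $S$ satisfies the hypothesis of part (1) and $\mathcal{L}_S$ is complete by that part. The only mildly delicate step is excluding the $|G|=p$ subcase, but the abelian observation dispatches it at once.
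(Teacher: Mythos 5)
Your proposal is correct and follows essentially the same route as the paper: part (1) via the computation $\lambda_2(x,y,1,1)=xy^2x=x$, $\rho_2(x,y,1,1)=yx^2y=y$ in the rectangular band, and part (2) via Lemma~\ref{ideal-lambda} to get simplicity, the Rees structure theorem, and the prime-order dichotomy in which the group case is killed by commutativity (empty $\mathcal{L}_S$ contradicts connectedness) and the trivial-group case reduces to part (1). Your explicit three-way split of $|G|\cdot|I|\cdot|\Lambda|=p$ is a harmless refinement of the paper's two-case argument.
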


\begin{proof} (1) Let $e_{ij}$ and $e_{kl}$ be elements of  $S= {\mathcal{M}} (\{ e \}, I, \Lambda;P)$. Then
$e_{ij}=\lambda_2(e_{ij},e_{kl},1,1)$ and
$e_{kl}=\rho_2(e_{ij},e_{kl},$ $1,1)$. Hence there is an edge
between $e_{ij}$ and $e_{kl}$ in ${\mathcal L}_{S}$. Therefore,
${\mathcal L}_S$ is complete.

(2) Assume $S$ has prime order $p$ and ${\mathcal L}_{S}$ is
connected. By Lemma~\ref{ideal-lambda}, $S$ is a simple semigroup.
Hence, $S$ is isomorphic with a regular Rees matrix semigroup over
a (maximal) subgroup $G$, that is $S = {\mathcal M}(G, I, \Lambda;
P)$ for some sets $I$ and $\Lambda$. Since $\abs{S}=p$, it follows
that  $S=G$ and $\abs{S}=\abs{G}=p$ or $G=\{ e\}$. The former is
excluded as a group of order $p$ is commutative and thus it has an
empty lower non-nilpotent graph (and thus not a complete graph).
Hence, $G = \{ e \}$  and ${\mathcal L}_{S}$ is complete by part
(1).
\end{proof}

Note that if ${\mathcal L}_{S}$ is not connected then a $J$-class
of $S$ may contain different connected components of ${\mathcal
L}_{S}$. For example the simple semigroup $S=
\mathcal{M}^0(\{1\},4,2;\left(
\begin{matrix} 1 & 1 & \theta & \theta\\ \theta & \theta & 1 & 1
\end{matrix} \right)
)$ has only one $J$-class, but has edges only between
$1_{11},1_{21}$ and $1_{32},1_{42}$ in ${\mathcal L}_{S}$.

We say that a vertex  $v$ of a graph is totally connected if there
are edges between $v$ and all other vertices of the graph.

\begin{lem}  \label{Complete-degree}
Let $S$ be a semigroup and ${\mathcal N}_S$ be its upper
non-nilpotent graph.  If a vertex $a$ in ${\mathcal N}_S$ is
totally connected, then $a$ is idempotent.
\end{lem}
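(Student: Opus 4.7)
The plan is to prove the contrapositive: if $a$ is not idempotent, then there is some vertex distinct from $a$ not joined to it by an edge in ${\mathcal N}_S$, contradicting total connectedness. The obvious candidate to test is $b = a^2$, since the subsemigroup it generates together with $a$ is particularly simple.

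More precisely, suppose for contradiction that $a^2 \neq a$. Then $b := a^2$ is an element of $S$ distinct from $a$, and hence a vertex of ${\mathcal N}_S$ distinct from $a$. Since $a$ is totally connected, there must be an edge between $a$ and $b$, so by the definition of ${\mathcal N}_S$ the subsemigroup $\langle a, b \rangle = \langle a, a^2 \rangle = \langle a \rangle$ must fail to be nilpotent in the sense of Malcev.

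The contradiction will come from the observation that $\langle a \rangle$ is monogenic, hence commutative, and every commutative semigroup is nilpotent of class $1$: indeed, for all $x, y, z_1 \in S$ one has
\[
\lambda_1(x, y, z_1) = x z_1 y = y z_1 x = \rho_1(x, y, z_1)
\]
whenever the multiplication is commutative. Applying this to $\langle a \rangle$ shows that $\langle a \rangle$ is nilpotent, contradicting the existence of an edge between $a$ and $a^2$. Hence $a^2 = a$, i.e.\ $a$ is idempotent.

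There is essentially no obstacle here; the only point that needs to be noted is the elementary fact that a commutative (in particular monogenic) semigroup is Malcev nilpotent of class $1$, after which the choice $b = a^2$ finishes the argument immediately.
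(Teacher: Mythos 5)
Your proof is correct and takes essentially the same approach as the paper: both test the candidate $b=a^2$ and use that $\langle a,a^2\rangle=\langle a\rangle$ is commutative, hence Malcev nilpotent, so no edge can join $a$ to $a^2$. The only difference is that you spell out the (correct) verification that a commutative semigroup is nilpotent of class $1$, which the paper leaves implicit; note only that $z_1$ ranges over $S^1$ rather than $S$, which changes nothing.
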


\begin{proof}
If $a^2=b$, then $\langle a, b \rangle = \langle a, a^2 \rangle
=\langle a \rangle$. Thus, $\langle a, b \rangle$ is nilpotent and
there is no edge between $a$ and $b$. Which gives $a^2=a$.
\end{proof}

\begin{prop} \label{complete-graph}
The following conditions are equivalent for a semigroup$~S$.
\begin{enumerate}
    \item ${\mathcal L}_S$ is complete.
    \item ${\mathcal N}_S$ is complete.
    \item ${\mathcal M}_S$ is complete.
    \item $S$ is a completely simple semigroup that is a band, or equivalently,  $S =
    \mathcal{M}(\{e\},I,\Lambda;P)$.
\end{enumerate}
\end{prop}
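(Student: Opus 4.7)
The plan is to close the cycle $(4) \Rightarrow (1) \Rightarrow (2) \Rightarrow (3) \Rightarrow (4)$. The implication $(4) \Rightarrow (1)$ is exactly Proposition~\ref{connected-complete}(1). Since ${\mathcal L}_{S}$ is a subgraph of ${\mathcal N}_{S}$ on the common vertex set $S$, the implication $(1) \Rightarrow (2)$ is immediate. For $(2) \Rightarrow (3)$, I would observe that every commutative semigroup is Malcev nilpotent of class $1$: if $xy = yx$ identically, then $\lambda_{1}(x,y,z) = xzy = yzx = \rho_{1}(x,y,z)$. Hence commuting elements generate a nilpotent subsemigroup, so the absence of an edge in ${\mathcal M}_{S}$ forces the absence of an edge in ${\mathcal N}_{S}$; thus ${\mathcal N}_{S}$ is a subgraph of ${\mathcal M}_{S}$ and completeness transfers.

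The substantive work is $(3) \Rightarrow (4)$, which I would carry out in two steps. \textbf{Step 1:} $S$ is a band. For any $a \in S$ the elements $a$ and $a^{2}$ commute since $a \cdot a^{2} = a^{3} = a^{2} \cdot a$; if $a^{2} \neq a$ these would be two distinct commuting vertices, contradicting completeness of ${\mathcal M}_{S}$. Hence $a^{2} = a$ for every $a \in S$. \textbf{Step 2:} $S$ satisfies the rectangular band identity $efe = e$. Fix $e, f \in S$ and set $g := efe$. Using $e^{2}=e$ and the band identity $(ef)^{2} = ef$, I compute $g^{2} = efe \cdot efe = efefe = (ef)\cdot e = efe = g$, so $g$ is idempotent; moreover $eg = e^{2}fe = efe = g$ and $ge = efe^{2} = efe = g$, so $e$ and $g$ commute. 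Completeness of ${\mathcal M}_{S}$ then forces $e = g$, i.e., $efe = e$. Consequently $S$ is a rectangular band, hence $S \cong \mathcal{M}(\{e\}, I, \Lambda; P)$ with every entry of $P$ equal to $e$, which is $(4)$.

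The only genuinely creative ingredient is exhibiting the commuting pair $(e, efe)$ that detects failure of rectangularity within a band; once this is in hand, everything else reduces to routine manipulations with the identities $e^{2}=e$ and $(ef)^{2}=ef$.
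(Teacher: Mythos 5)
Your proposal is correct and follows essentially the same route as the paper: the implications $(4)\Rightarrow(1)\Rightarrow(2)\Rightarrow(3)$ are handled by the same subgraph inclusions and Proposition~\ref{connected-complete}, and your key step for $(3)\Rightarrow(4)$ — first forcing $S$ to be a band via the commuting pair $(a,a^{2})$, then exhibiting the commuting pair $(e,efe)$ to deduce the rectangular band identity $efe=e$ — is exactly the paper's observation that $a(aba)=(aba)a$ forces $a=aba$. (Your check that $g=efe$ is idempotent is redundant once $S$ is known to be a band, but harmless.)
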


\begin{proof}
The implications (1) implies (2) and (2) implies (3) are obvious.

To prove (3) implies (4), assume ${\mathcal M}_S$ is complete.
Then, since $x^{2}x=xx^{2}$ for $x\in S$, we get that  $x$ is
idempotent and thus $S$  is a band. Because ${\mathcal M}_S$ is
complete, we have that each idempotent is primitive. As $S$ is
band, we also obtain $a(aba)= (aba)a$ and $b(bab)=(bab)b$ for
elements $a, b \in S$. The completeness of ${\mathcal M}_S$
implies that $a=aba$ and $b=bab$. Hence any two elements of $S$
are inverses of each other. Therefore, $S$ is completely
simple that is a band (so its maximal subgroups are trivial).
Equivalently, $S=\mathcal{M}(\{ e \}, I, \Lambda; P)$ with $P$ a
sandwich matrix all whose components are equal to $e$. This proves
(4).

That (4) implies (1) follows from
Proposition~\ref{connected-complete}.
\end{proof}

\begin{lem} \label{idempotent-commute} Let $S$  be a band. The following properties hold.
\begin{enumerate}
\item ${\mathcal M}_S = {\mathcal N}_S$.

\item Each complete connected component of ${\mathcal N}_{S}$ is a
subsemigroup of $S$.

\item  If  ${\mathcal N}_{S}$  has no isolated vertex, then
each connected component of ${\mathcal N}_{S}$ is a subsemigroup
of $S$.
\end{enumerate}
\end{lem}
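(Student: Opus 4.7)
The plan is to prove the three parts in order, since (2) and (3) depend on the identification $\mathcal{N}_S = \mathcal{M}_S$ from (1). For (1), the inclusion $\mathcal{N}_S \subseteq \mathcal{M}_S$ is immediate in any semigroup: if $xy = yx$ then $\langle x,y\rangle$ is commutative, so $\lambda_1(x,y,z) = xzy = yzx = \rho_1(x,y,z)$ for all $z \in S^1$, i.e.\ $\langle x,y\rangle$ is Malcev-nilpotent of class one. For the reverse inclusion I would exploit that a band is a semilattice of its rectangular $\mathcal{D}$-classes, so that both $xy$ and $yx$ lie in a common $\mathcal{D}$-class $D$, which is a rectangular band. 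Then $\langle xy, yx\rangle$ is a sub-band of the rectangular band $D$, hence itself rectangular, and nontrivial whenever $xy \neq yx$. A short direct computation of the Malcev sequences with $z_i = 1$ on two distinct elements of a rectangular band shows that a nontrivial rectangular band is not nilpotent (the sequence just cycles between $(a,b)$ and $(ab,ba)$, both with distinct entries), so $\langle x,y\rangle$ is not nilpotent either, giving the edge $\{x,y\}$ in $\mathcal{N}_S$.

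For (2), let $C$ be a complete connected component of $\mathcal{N}_S$ and let $x \neq y$ lie in $C$. Completeness together with (1) gives $xy \neq yx$. Assuming $xy \notin C$, the element $xy$ has no $\mathcal{N}_S$-edge to $x$ or to $y$, so by (1) it commutes with both. Since $x^2 = x$ and $y^2 = y$, these commutativities read $xy = xyx$ and $xy = yxy$. The one-line band manipulation
\[
yx = (yx)^2 = yxyx = (yxy)x = (xy)x = xyx = xy
\]
then contradicts $xy \neq yx$, so $xy \in C$.

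For (3), the same argument handles any adjacent pair in a component $C$. The new case is a non-adjacent pair $x, y \in C$, where $xy = yx$ and one may assume $xy \notin \{x, y\}$ (else trivially $xy \in C$). Suppose for contradiction $xy \notin C$. The no-isolated-vertex hypothesis supplies a neighbor $v$ of $xy$; this $v$ sits in the component of $xy$, which is different from $C$, so $v \notin C$ and hence (by (1)) $v$ commutes with both $x$ and $y$. Associativity in the band then gives
\[
v(xy) = (vx)y = (xv)y = x(vy) = x(yv) = (xy)v,
\]
meaning $v$ commutes with $xy$, contradicting its choice as a neighbor. The main obstacle is precisely this step in (3): the argument of (2) breaks when $x, y$ commute, and the role of the ``no isolated vertex'' hypothesis is to manufacture an auxiliary vertex $v$ whose forced commutations with $x$ and $y$ deliver the required contradiction.
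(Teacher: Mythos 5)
Your proof is correct and takes essentially the same route as the paper's: the edge in part (1) comes down to the computation $\lambda_2(xy,yx,1,1)=xy$, $\rho_2(xy,yx,1,1)=yx$ for distinct $xy,yx$ (the paper gets this directly from idempotency of $xy$ and $yx$, without the detour through the semilattice-of-rectangular-bands decomposition), and parts (2) and (3) rest on the same commutation identities, the only cosmetic difference being that the paper runs the auxiliary-neighbour argument of (3) uniformly instead of splitting into adjacent and non-adjacent pairs. No gaps.
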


\begin{proof}
(1) Let $x$ and $y$ be two arbitrary distinct elements of $S$. If
$xy =yx$, then there is no edge between $x$ and $y$ in ${\mathcal
N}_S$ nor in ${\mathcal M}_{S}$. If $xy \neq yx$,  then $xy =
\lambda_{2}(xy, yx, 1, 1)$ and $yx = \rho_{2}(xy, yx, 1, 1)$,
because $x,\,  y,\, xy,\, yx$ are idempotent elements. Then, by
Lemma~\ref{finite-nilpotent} $\langle x, y \rangle$ is not
nilpotent. Hence there is an edge between $x$ and $y$ in both
${\mathcal N}_{S}$ and ${\mathcal M}_{S}$. Consequently,
${\mathcal M}_{S}={\mathcal N}_{S}$.

(2) Suppose that $a, b\in S$ are in the same complete connected
component of ${\mathcal N}_S$ but $c=ab$ is in a different
connected component.  As by the first part ${\mathcal M}_S =
{\mathcal N}_S$, we have that $c$ commutes with $a$ and $b$. We
then have $ba=(ba)(ba)= b(aba)= b(ca)= b(ac)=
b(aab)=b(ab)=bc=cb=abb=ab$. Therefore $ab=ba$, in contradiction
with the fact  there is an edge between $a$ and $b$ in ${\mathcal
M}_{S}={\mathcal N}_{S}$. Hence, indeed, each connected component
of ${\mathcal N}_S$ is  a subsemigroup.

(3) Suppose that $c, d\in S$ are in the same connected component
of ${\mathcal N}_S$  but $e=cd$ is in a different connected
component. As ${\mathcal N}_{S}$ has no isolated vertex, there
exists an element $f\in S$ such that there is an edge between $e$
and $f$ in ${\mathcal N}_{S}$. By the first part, $f$ commutes
with $c$ and $d$. Hence $fe= fcd = cfd = cdf = ef$. This yields a
contradiction with the fact that there is an edge between $e$ and
$f$ in ${\mathcal N}_{S}={\mathcal M}_{S}$.
\end{proof}

Note that if a band has an isolated vertex then in general its
connected components are not subsemigroups. For example, let
$B_{1}=\langle a,b\rangle$ and $B_{2}=\langle a,c\rangle$ be two
free bands and let $S$ be the semigroup that as a set is the union
$B_{1}\cup B_{2} \cup \{ \theta \}$  ($\theta \not\in B_{1}\cup
B_{2}$) and with multiplication such that $B_{1}$ and $B_{2}$ are
subsemigroups, $\theta $ is a zero element and
$xy=yx=\theta$ for $x\in B_{1}\backslash \langle a \rangle$, $y\in
B_{2}\backslash \langle a \rangle$. Then the connected components
of $S$ are $\{ \theta \}$ and $S\backslash \{ \theta \}$. Clearly
the latter is not a subsemigroup.

It is easy to give  an example of a finite semigroup $S$
such that ${\mathcal N}_{S}\neq {\mathcal M}_{S}$. Also  one
can easily construct a finite semigroup $T$ that is not a band and
for which  ${\mathcal N}_{T}={\mathcal M}_{T}$. In general, even
for a band $B$, one does not have that ${\mathcal L}_{B}={\mathcal
M}_{B}$.

In order to state the following result we recall (\cite{B
Steinberg}) that a total ideal extension of a semigroup $S$ by a
semigroup $T$ is a semigroup $M$ that is  the disjoint union $S
\cup T$ and that contains  $S$ and $T$ as subsemigroups  such that
$S$ is an ideal of $M$. If furthermore $st=s=ts$ for all $s\in S$
and $t \in T$, then we call $M$  a trivial total ideal extension
of $S$ by $T$ and we denote this by $T \angle S$. More generally,
if $n > 2$ then by  $S_1 \; \angle \; S_2 \; \angle \cdots \angle
\; S_n$ we denote the semigroup which is the disjoint  union
$\bigcup_{1 \leq i \leq n} S_i$ and for all $1 \leq i < j \leq n$,
$S_i \cup S_j = S_i \; \angle \; S_j$. Or equivalently, $S_1 \;
\angle \; S_2 \;  \angle \; \cdots \; \angle \; S_n$ is defined
recursively as $(S_1 \; \angle \; S_2 \; \angle \; \cdots \;
\angle \; S_{n-1})\; \angle \; S_n$.

\begin{lem}\label{total ideal}
Let $S_1, S_2, \cdots, S_n$ be semigroups. There is no edge
between any element of $S_i$ and any element of $S_j$ for $1\leq i
< j \leq n$ in ${\mathcal N}_{S_1 \; \angle \; S_2 \;  \angle \;
\cdots \; \angle \; S_n}$.

\end{lem}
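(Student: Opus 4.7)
The plan is to fix $i<j$, $a\in S_i$ and $b\in S_j$, and prove that $\langle a,b\rangle$ is a nilpotent subsemigroup; by Definition~\ref{non-nilpotent-graph} this is precisely the non-existence of an edge between $a$ and $b$ in ${\mathcal N}_{S_1\angle\cdots\angle S_n}$.

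First I would unpack the iterated extension. Since $S_i\cup S_j=S_i\angle S_j$, and since in the asymmetric notation $T\angle S$ the right-hand factor $S$ is the ideal satisfying $st=s=ts$ for all $s\in S$, $t\in T$, one reads off that $S_j$ is an ideal of $S_i\cup S_j$ and
$$ab=ba=b.$$
The only subtle point of the proof is parsing the symbol $\angle$ correctly: the factor with the \emph{larger} index is the absorbing ideal.

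Next I would describe $\langle a,b\rangle$ explicitly. By induction on word length, the relation $ab=ba=b$ absorbs any $a$ that sits adjacent to a $b$ in a word over the alphabet $\{a,b\}$. Consequently any word containing $\ell\geq 1$ occurrences of $b$ collapses to $b^{\ell}$, while any word with no occurrence of $b$ is some power of $a$. Thus $\langle a,b\rangle=\langle a\rangle\cup\langle b\rangle$ with multiplication $a^{k}b^{\ell}=b^{\ell}a^{k}=b^{\ell}$, and this semigroup is commutative.

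Finally, any commutative semigroup $T$ is nilpotent of class $1$: for $x,y\in T$ and $z\in T^{1}$, commutativity gives $xzy=xyz=yxz=yzx$, i.e.\ $\lambda_{1}(x,y,z)=\rho_{1}(x,y,z)$. Therefore $\langle a,b\rangle$ is nilpotent and the lemma follows. Beyond the notational subtlety flagged above I anticipate no obstacle; the argument reduces to a short word-reduction exercise together with the triviality that commutativity forces $\lambda_{1}=\rho_{1}$.
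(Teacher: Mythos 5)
Your proof is correct and follows essentially the same route as the paper: the paper's one-line argument simply observes that $st=ts$ by the definition of the trivial total ideal extension, whence $\langle s,t\rangle$ is commutative and therefore nilpotent. Your additional computation that $ab=ba=b$ and the explicit word-reduction describing $\langle a,b\rangle$ are correct but unnecessary, since commutativity of the two generators already suffices.
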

\begin{proof}
If $s \in S_i$ and $t\in S_j$ for $1\leq i < j \leq n$, then
$st=ts$ because $S_i \cup S_j = S_i \; \angle \; S_j$. Hence there
is no edge between $s$ and $t$ in ${\mathcal N}_{S_1 \; \angle \;
S_2 \; \angle \; \cdots \; \angle \; S_n}$.
\end{proof}

We also recall another notion (see for example \cite{cliford}).
Suppose $S$ is a semigroup  such that $S= \bigcup \{S_{\alpha
}\mid \alpha \in \Omega \}$, a disjoint union of subsemigroups
$S_{\alpha}$, and  such that for every pair of elements $\alpha,
\beta \in \Omega$ we have $S_{\alpha}S_{\beta} \subseteq
S_{\gamma}$ for some $\gamma \in \Omega$. One then has a product
in $\Omega$ defined by $\alpha \beta = \gamma$ if
$S_{\alpha}S_{\beta} \subseteq S_{\gamma}$ and one says that $S$
is the union of the band $\Omega$ of semigroups $S_{\alpha}$, with
$\alpha \in \Omega$. If $\Omega$ is commutative, then one obtains
a partial order relation $\leq $ on  $\Omega$ with $\beta
\leq\alpha$ if $\alpha \beta =\beta$. In this case  $\Omega$ is a
semilattice and one says that $S$ is the semilattice $\Omega$
of semigroups $S_{\alpha}$.

\begin{thm}\label{semilattice-no-isolate}
Let $S$ be a semigroup and let $S_\omega$, with $\omega \in
\Omega$, denote the connected ${\mathcal N}_S$-components. The
following properties hold.
\begin{enumerate}
    \item If $\abs{S_{\omega}} > 1$, for each $\omega \in \Omega$, and if each connected ${\mathcal N}_{S}$-component is complete then
     $S$ is a band and each connected ${\mathcal N}_{S}$-component is a
    subsemigroup.
    \item If $S$ is a band and each connected ${\mathcal N}_{S}$-component is complete
    then ${\mathcal N}_{S}={\mathcal
    M}_{S}$,  the semigroup
    $S$ is a  semilattice $\Omega$ of the
    semigroups $S_{\alpha}$, and
    either $S_{\alpha} \cup S_{\beta}$ is a trivial total ideal extension of $S_{\alpha}$ by
    $S_{\beta}$ or of $S_{\beta}$ by $S_{\alpha}$ or
    $S_{\alpha}S_{\beta}= S_{\alpha \beta}$ with $\abs{S_{\alpha
    \beta}}=1$ for $\alpha, \beta \in \Omega$.
\end{enumerate}
\end{thm}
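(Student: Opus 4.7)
The plan is to first show every element of $S$ is idempotent, so that $S$ is a band, and then invoke Lemma~\ref{idempotent-commute}(3). Fix any $x\in S$ lying in a component $S_\omega$. Since $|S_\omega|\geq 2$, pick $y\in S_\omega\setminus\{x\}$; completeness of the component gives an edge between $x$ and $y$, so $\langle x,y\rangle$ is not nilpotent. Suppose, toward a contradiction, that $x^2\neq x$. Then $\langle x,x^2\rangle=\langle x\rangle$ is commutative and hence nilpotent, so there is no edge between $x$ and $x^2$; consequently $x^2$ lies in some component $S_{\omega'}$ with $\omega'\neq\omega$. Using $|S_{\omega'}|\geq 2$, choose $z\in S_{\omega'}\setminus\{x^2\}$; the edge between $x^2$ and $z$ gives that $\langle x^2,z\rangle$ is not nilpotent. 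On the other hand, $x$ and $z$ sit in different components, so $\langle x,z\rangle$ is nilpotent, and since subsemigroups of nilpotent semigroups are nilpotent (the Malcev identity is inherited), the subsemigroup $\langle x^2,z\rangle\subseteq\langle x,z\rangle$ is also nilpotent, a contradiction. Thus $x^2=x$ for every $x\in S$ and $S$ is a band. Because $|S_\omega|\geq 2$ for every $\omega$, there are no isolated vertices in ${\mathcal N}_S$, and Lemma~\ref{idempotent-commute}(3) then gives that each connected component is a subsemigroup.

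\textbf{Part 2, setup.} Since $S$ is now a band, Lemma~\ref{idempotent-commute}(1) immediately yields ${\mathcal N}_S={\mathcal M}_S$. Each $S_\alpha$ with $|S_\alpha|\geq 2$ is a subsemigroup by Lemma~\ref{idempotent-commute}(2), and singletons trivially are; Proposition~\ref{complete-graph} then identifies each $S_\alpha$ as a rectangular band. Edges in ${\mathcal N}_S={\mathcal M}_S$ live only inside components, so elements from distinct components commute. For $x\in S_\alpha$, $y\in S_\beta$ with $\alpha\neq\beta$, the relation $xy=yx$ combined with $x\cdot xy=x^2y=xy$ and $xy\cdot x=xyx=x(yx)=x(xy)=xy$ shows that $xy$ commutes with both $x$ and $y$; since distinct elements of a complete component never commute, $xy$ equals $x$ or lies in a component different from $S_\alpha$, and likewise equals $y$ or lies outside $S_\beta$.

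\textbf{Part 2, the key trichotomy.} The main substantive step, which I expect to be the principal obstacle, is the uniformity claim: for distinct $\alpha,\beta\in\Omega$ exactly one of the following holds: (i) $xy=x$ for all $x\in S_\alpha$, $y\in S_\beta$; (ii) $xy=y$ for all such $x,y$; or (iii) there exists a single element $c$ with $xy=c$ for every such $x,y$, and $\{c\}$ is itself a singleton component. The plan is to assume a mixed behaviour, say $x_0y_0=y_0$ while $x_1y_1\neq y_1$, and derive a contradiction by expanding products of the form $(x_iy_j)(x_ky_l)=(x_ix_k)(y_jy_l)$ versus $(x_ky_l)(x_iy_j)=(x_kx_i)(y_ly_j)$, using cross-component commutativity to slide factors past one another. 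Combined with the rectangular-band multiplication $e_{ij}e_{kl}=e_{il}$ inside $S_\alpha$ and $S_\beta$ and the fact that distinct elements in these rectangular bands do not commute, these identities should force the products $xy$ to behave uniformly and yield one of (i)--(iii).

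\textbf{Part 2, assembly.} Granted the trichotomy, define $\alpha\beta\in\Omega$ as the index of the unique component containing $S_\alpha S_\beta$. This product on $\Omega$ is commutative (since $xy=yx$ across distinct components), idempotent ($\alpha\alpha=\alpha$, because each $S_\alpha$ is a subsemigroup), and associative (by associativity of $S$); hence $\Omega$ is a semilattice and $S=\bigcup_{\alpha\in\Omega}S_\alpha$ is a semilattice of the $S_\alpha$. Finally, cases (i) and (ii) of the trichotomy are exactly the statements $S_\beta \angle S_\alpha$ and $S_\alpha \angle S_\beta$ (trivial total ideal extensions with $S_\alpha$ respectively $S_\beta$ as ideal), while case (iii) produces $S_\alpha S_\beta=S_{\alpha\beta}$ with $|S_{\alpha\beta}|=1$, which completes the proof.
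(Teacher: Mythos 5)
Your Part 1 is correct and is essentially the paper's argument (assume $x^2\neq x$, note $x^2$ lands in a different component, pick a neighbour $z$ of $x^2$ there, and contradict nilpotency of $\langle x,z\rangle\supseteq\langle x^2,z\rangle$), and your setup for Part 2 (${\mathcal N}_S={\mathcal M}_S$ via Lemma~\ref{idempotent-commute}, each component a rectangular band via Proposition~\ref{complete-graph}, cross-component commutativity) also matches the paper. However, the heart of Part 2 --- what you call the ``key trichotomy'' --- is exactly the substance of the theorem, and you have not proved it: you state a plan (``expanding products of the form $(x_iy_j)(x_ky_l)$ \dots should force the products to behave uniformly'') and explicitly flag it as the principal obstacle. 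That is a genuine gap, not a routine verification.

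Moreover, the sketched plan as stated cannot deliver one of the required conclusions. In the case where some product $x_1x_2$ lies outside $\{x_1,x_2\}$, the statement demands that the \emph{entire} receiving component $S_\gamma$ be a singleton (and that $S_\alpha S_\beta=S_\gamma$), not merely that the set of products $S_\alpha S_\beta$ be a singleton. Manipulating only expressions built from elements of $S_\alpha$ and $S_\beta$ never touches the elements of $S_\gamma$ that are not such products. The paper handles this by writing $S_\gamma=\mathcal{M}(\{e\},I,\Lambda;P)$, setting $x_3=x_1x_2=e_{ij}$, taking an \emph{arbitrary} $x_4=e_{kj}\in S_\gamma$, and computing (using that $x_4$ commutes with $x_1$ and $x_2$, which lie in other components) that $x_3x_4=x_4x_3=x_4$, whence $x_4=x_3$ by completeness of $S_\gamma$; this forces $\abs{I}=\abs{\Lambda}=1$. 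A further computation with an auxiliary element $a'=e'_{i'j''}\in S_\alpha$ then shows $S_\alpha S_\beta=\{x_1x_2\}$. In the complementary case ($xy\in\{x,y\}$ always), the paper's argument is short: if neither component is an ideal of $S_\alpha\cup S_\beta$ one finds $x_i\in S_\alpha$ and $x_j,x_{j'}\in S_\beta$ with $x_ix_j=x_i$ and $x_ix_{j'}=x_{j'}$, and then $x_jx_{j'}=x_jx_ix_{j'}=x_ix_{j'}=x_{j'}x_ix_j=x_{j'}x_j$, producing two distinct commuting elements in the complete component $S_\beta$, a contradiction. You would need to supply arguments of this kind (in particular, one that reaches arbitrary elements of $S_\gamma$) for your proof to be complete.
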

\begin{proof}
(1) Suppose  $\abs{S_{\omega}} > 1$ for each $\omega \in \Omega$.
Let $x_{1}\in S$ and put $x_{1}^{2}=x_{2}$. We need to prove that
$x_{1}=x_{2}$. Assume the contrary. Since $\langle
x_{1},x_{2} \rangle$ is commutative, the elements  $x_1$ and $x_2$
belong to different connected ${\mathcal N}_{S}$-components. Since
the connected component containing $x_{2}$ has more than one
element, there exists $x_3\in S$ such that there is an edge
between $x_2$ and $x_3$ in ${\mathcal N}_{S}$. Note that $x_{1}$
and $x_{3}$ are then in different connected ${\mathcal
N}_{S}$-components, and thus $\langle x_{1},x_{3}\rangle$  is
nilpotent. However, this yields a contradiction as $\langle x_2,
x_3 \rangle \subseteq \langle x_1, x_3 \rangle$ and $\langle
x_{2},x_{3}\rangle$ is not nilpotent.
Lemma~\ref{idempotent-commute} yields that each $S_{\alpha}$ is a
semigroup.

(2) Suppose $S$ is a band and each connected ${\mathcal
N}_{S}$-component is complete. By Lemma~\ref{idempotent-commute},
${\mathcal N}_{S}={\mathcal M}_{S}$ and each $S_{\alpha}$ is a
subsemigroup.

Fix $\alpha, \beta \in \omega$. Suppose $x_{1}\in S_{\alpha}$ and
$x_{2}\in S_{\beta}$, with $\alpha \neq \beta$.  So, $x_{1}$ and
$x_{2}$ commute. Put $x_{3}=x_{1}x_{2}$. Then $x_{3}$ commutes
with both $x_{1}$ and $x_{2}$.
 Let $\gamma \in \Omega$ be such that
$x_{3}\in S_{\gamma}$. We claim that if $x_{3}\not\in \{ x_{1}
,x_{2}\}$ then $\abs{S_{\gamma}} = 1$ and
$S_{\alpha}S_{\beta}=S_{\gamma}$. Indeed, suppose that
$x_{3}\not\in \{ x_{1} ,x_{2}\}$. Because, by assumption, all
connected ${\mathcal N}_{S}$-components are complete and
$x_{1}x_{3}=x_{3}x_{1}$, $x_{2}x_{3}=x_{3}x_{2}$, we get that
$x_{3}\not\in S_{\alpha} \cup S_{\beta}$. So $x_{3}\in S_{\gamma}$
with $\gamma \not \in \{ \alpha , \beta \}$. By
Proposition~\ref{complete-graph}, $S_{\gamma}=
\mathcal{M}(\{e\},I,\Lambda;P)$ with all entries of the sandwich
matrix $P$ equal to $e$.  Write $x_{3}=e_{ij}$ for some $i\in I$
and $j\in \Lambda$.

Assume $k\in I$ and  let $x_{4}=e_{kj}$.  Because $x_{1}$  and
$x_{4}$ belong to different connected ${\mathcal
N}_{S}$-components, we have that $x_{1}x_{4}=x_{4}x_{1}$.
Similarly, $x_{2}x_{4}=x_{4}x_{2}$. Since also
$x_{1}^{2}=x_{1}$ and $x_{4}^{2}=x_{4}$ we get that
$$x_{4}x_{3} = e_{kj} e_{ij}= x_{4},$$
$$x_{1} x_{4} = x_{4} x_{1} = e_{kj}x_{1} = e_{kj} e_{ij}x_{1}
=x_{4}
 x_{3}x_{1}=
  x_{4} x_{3} =e_{kj} e_{ij}= x_{4},$$
and
 $$x_{2} x_{4} =x_{4}x_{2} =x_{4}x_{3} x_{2}
 =x_{4} x_{3}= x_{4}.$$
Therefore,
 $$x_{3} x_{4}=x_{1}x_{2}x_{4} = x_{1}x_{4} = x_{4} .$$
So $x_{4}$ and $x_{3}$ commute and they belong to the same
complete connected ${\mathcal N}_{S}$-component. Therefore
$x_{4}=x_{3}$ and thus $k=i$ and thus $\abs{I}=1$. Similarly
$\abs{\Lambda}=1$ and thus $\abs{S_{\gamma}}=1$.

Assume $a \in S_{\alpha}$. Because $S_{\alpha}$ is completely
simple, we obtain from Proposition~\ref{complete-graph} that
$S_{\alpha}= \mathcal{M}(\{e'\},I',\Lambda';P')$ with all entries
of the sandwich matrix $P'$ equal to $e'$. Write $x_{1}=e'_{i'j'}
$ and $a= e'_{i''j''}$ for some $i', i''\in I$ and $j',
j''\in \Lambda$. Let $a'= e'_{i'j''}\in S_{\alpha}$. Then $$aa'
=a,\; a'a=a',\; a'x_1=x_1,\;  x_1a'=a'.$$ Note that we then also
get that
$$a'a=x_{1}a.$$ Since elements in different connected ${\mathcal
N}_{S}$-components commute we get that
$$x_1x_2 = a'x_1x_2 = x_1x_2a' = x_2x_1a' = x_2a' = a'x_2= a'ax_2 = x_{1}ax_{2} = x_1x_2a .$$ Thus we have proved that
 $ax_{1}x_{2} = x_{1}x_{2}a = x_{1}x_{2}$ for any  $a\in
S_{\alpha}$. Hence, for any $b\in S_{\alpha},$
 $$ ax_{1}bx_{2}=ax_{1}x_{2}b=x_{1}x_{2}b=bx_{1}x_{2}=x_{1}x_{2}. $$
As $S_{\alpha}$ is a completely simple semigroup, we
consequently obtain that $S_{\alpha}x_{2}=\{ x_{1}x_{2}\}$.
Because $S_{\alpha}$ and $S_{\beta}$ commute, we get by symmetry
that also $x_1S_{\beta} =S_{\gamma}$ and thus $S_{\alpha}S_{\beta}
= \{ x_1x_2\} =S_{\gamma }$. This proves the claim.

So, for the remainder of the proof, we may assume that
$x_{1}x_{2}=x_{2}x_{1}\in \{ x_{1} , x_{2}\}$  for all $x_{1}\in
S_{\alpha}$ and $x_{2}\in S_{\beta}$. Clearly we have that
$\langle S_{\alpha} \cup S_{\beta} \rangle = S_{\alpha} \cup
S_{\beta}$. We now show that either $S_{\alpha}$ or $S_{\beta}$ is
an ideal in $\langle S_{\alpha} \cup S_{\beta} \rangle $. For if
not, then, because of the symmetry in $S_{\alpha}$ and
$S_{\beta}$, we may assume that there exist $x_i \in S_{\alpha} $,
$x_j, x_{j'} \in S_{\beta}$ with $x_ix_j= x_i, x_ix_{j'}=x_{j'}$.
It follows that
$x_jx_{j'}=x_jx_ix_{j'}=x_ix_{j'}=x_{j'}x_i=x_{j'}x_ix_j=x_{j'}x_j$.
So $x_{j}$ and $x_{j'}$ distinct commuting elements in the
connected connected ${\mathcal N}_{S}$-component $S_{\beta}$, in
contradiction with the completeness of this component.
Consequently, $S_{\alpha} \cup S_{\beta}$ is a
trivial total ideal extension of $S_{\alpha}$ by $S_{\beta}$ or of
$S_{\beta}$ by $S_{\alpha}$.
\end{proof}

\begin{cor}\label{semilattice-no-isolate-finite}
Let $S$ be a finite semigroup and let $S_i$, for $1 \leq i \leq
n$, denote the connected ${\mathcal N}_S$-components. If the
connected component $S_{i}$ is complete and $\abs{S_i}> 1$
for all $1\leq i \leq n$, then $S= S_{\sigma (1)}\;  \angle \;
S_{\sigma (2)}\; \angle \; \cdots \; \angle \; S_{\sigma (n)}$ for
some $\sigma \in Sym_n$.
\end{cor}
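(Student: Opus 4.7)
The plan is to deduce the corollary almost directly from Theorem~\ref{semilattice-no-isolate}, using the hypothesis $\abs{S_i}>1$ to eliminate the third alternative of part~(2), and then to show that the resulting ``trivial total ideal extension'' relation is a total order on the set of connected components.

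First, I would invoke Theorem~\ref{semilattice-no-isolate}(1) to conclude that $S$ is a band and each $S_{i}$ is a subsemigroup. Next, I would apply part~(2) to each pair $S_{i},S_{j}$ of distinct connected components. The three alternatives given there are: (a) $S_{i}\cup S_{j}$ is a trivial total ideal extension of $S_{i}$ by $S_{j}$; (b) vice versa; or (c) $S_{i}S_{j}=S_{\gamma}$ with $\abs{S_{\gamma}}=1$ (where $S_\gamma$ is again a connected ${\mathcal N}_S$-component). Since $\abs{S_{k}}>1$ for every $k$, option (c) is excluded. Hence, for each pair $i\neq j$, one of the components is a trivial total ideal in their union; this defines a binary relation $\prec$ on $\{S_{1},\dots,S_{n}\}$ by declaring $S_{i}\prec S_{j}$ if $st=ts=s$ for all $s\in S_{i}$, $t\in S_{j}$.

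The core task is to show that $\prec$ is a strict total order. Totality is immediate from step~two. Antisymmetry is easy: if both $S_{i}\prec S_{j}$ and $S_{j}\prec S_{i}$ held, then $s=t$ for all $s\in S_{i}, t\in S_{j}$, contradicting $i\neq j$ (since each component is nonempty). The step I expect to be the main, though still short, obstacle is transitivity: assuming $S_{i}\prec S_{j}\prec S_{k}$, I must rule out $S_{k}\prec S_{i}$. Here I would use that elements of distinct connected components commute, which follows from Lemma~\ref{idempotent-commute}(1) together with the fact (asserted in Theorem~\ref{semilattice-no-isolate}(2)) that ${\mathcal N}_{S}={\mathcal M}_{S}$. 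Picking $s\in S_{i}$, $u\in S_{j}$, $t\in S_{k}$, the assumptions give $su=us=s$ and $ut=tu=u$, and then
\[
st=(su)t=s(ut)=su=s,\qquad ts=t(us)=(tu)s=us=s,
\]
so $st=ts=s$, forcing $S_{i}\prec S_{k}$ and completing transitivity.

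Finally, since $\prec$ is a strict total order on the finite set $\{S_{1},\dots,S_{n}\}$, I can choose $\sigma\in\mathrm{Sym}_{n}$ so that $S_{\sigma(1)}\prec S_{\sigma(2)}\prec\cdots\prec S_{\sigma(n)}$. By construction of $\prec$, for every pair $i<j$ the union $S_{\sigma(i)}\cup S_{\sigma(j)}$ is a trivial total ideal extension of $S_{\sigma(i)}$ by $S_{\sigma(j)}$, which is precisely the definition of $S=S_{\sigma(1)}\;\angle\;S_{\sigma(2)}\;\angle\;\cdots\;\angle\;S_{\sigma(n)}$ recalled before Lemma~\ref{total ideal}. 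This yields the claimed iterated trivial total ideal extension structure.
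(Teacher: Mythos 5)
Your proof is correct and follows essentially the same route as the paper: both reduce to Theorem~\ref{semilattice-no-isolate}(2), discard the singleton-component alternative using $\abs{S_i}>1$, and establish transitivity of the resulting absorption relation by an associativity computation (the paper argues by contradiction via $(s_1s_2)s_3\neq s_1(s_2s_3)$, you compute $st=ts=s$ directly). The only quibble is orientational: with the paper's convention that $T\angle S$ has $S$ as the ideal and that in $S_1\angle\cdots\angle S_n$ the \emph{later} factor of each pair is the ideal, your chain $S_{\sigma(1)}\prec\cdots\prec S_{\sigma(n)}$ actually yields $S=S_{\sigma(n)}\angle\cdots\angle S_{\sigma(1)}$, so you should take the reversed permutation --- harmless, since the statement only asks for some $\sigma\in Sym_n$.
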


\begin{proof}
Because of Theorem~\ref{semilattice-no-isolate}, we have $S_{i}
\cup S_{j} =S_{i}\; \angle \; S_{j}$ or $S_{i}\cup S_{j} =S_{j}\;
\angle \;  S_{i}$ for $1\leq i < j \leq n$. To prove the
result it thus is sufficient to show that if $S_{i_{1}} \cup
S_{i_{2}} = S_{i_{1}} \; \angle \; S_{i_{2}}$ and $S_{i_{2}} \cup
S_{i_{3}} = S_{i_{2}} \;  \angle \; S_{i_{3}}$ then $S_{i_{1}}
\cup S_{i_{3}} = S_{i_{1}} \; \angle \; S_{i_{3}}$. Suppose this
conclusion is false, i.e. assume that $S_{i_{1}} \cup S_{i_{3}} =
S_{i_{3}} \; \angle \; S_{i_{1}}$. Then, for $s_{1} \in
S_{i_{1}}$, $s_{2} \in S_{i_{2}}$ and $s_{3} \in S_{i_{3}}$  we
get that $(s_{1}s_{2})s_{3}=s_{2}s_{3}=s_{3}$ while
$s_{1}(s_{2}s_{3})=s_{1}s_{3}=s_{1}$, a contradiction.
\end{proof}

If $S$ has an isolated vertex (i.e. $S$ has a connected ${\mathcal
N}_{S}$-component with only one element)  in ${\mathcal N}_S$,
then the  theorem does not hold in general. For example, all
connected ${\mathcal N}_S$-components of the semigroup $S =
\{a,b,c,d\}$ defined by the following Cayley table are complete
but $S$ is not a band (the vertex $b$ is isolated and $\{a, c,
d\}$ is a complete subgraph in ${\mathcal N}_S$.)
$$
\begin{tabular}{ c|c c c c }
      & $a$ & $b$ & $c$ & $d$\\ \hline
  $a$ & $b$ & $a$ & $c$ & $d$ \\
  $b$ & $a$ & $b$ & $c$ & $d$ \\
  $c$ & $d$ & $c$ & $c$ & $d$ \\
  $d$ & $c$ & $d$ & $c$ & $d$ \\
\end{tabular}$$

We have shown in Lemma~\ref{idempotent-commute} that in a band $S$
all complete  connected ${\mathcal N}_{S}$-components    are
subsemigroups. In general this does not hold, for example if $S$
is an abelian semigroup then all connected ${\mathcal
N}_{S}$-components are isolated and of course not necessarily
subsemigroups. We do not know  whether the connected components of
the upper non-nilpotent graph of a semigroup are subsemigroups in
case there are no isolated vertices. In this context we have the
following remark.

\begin{prop}
Let $S$ be a semigroup. If ${\mathcal N}_S$ does not have isolated
vertices then, for every $x \in S$, the elements of the cyclic
subsemigroup $\langle x \rangle$ are all in the same connected
component.
\end{prop}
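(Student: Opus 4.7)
The plan is to show directly that every $x^n\in \langle x\rangle$ lies in the connected component of $x$ in ${\mathcal N}_S$. The case $x^n=x$ is trivial, so I would focus on $x^n\neq x$ and try to exhibit a single common neighbour $z$ that is adjacent to both $x$ and $x^n$, yielding a path of length at most two.

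First I would invoke the hypothesis that ${\mathcal N}_S$ has no isolated vertices to produce a neighbour $z$ of $x^n$. By definition this means $z\neq x^n$ and the subsemigroup $\langle x^n,z\rangle$ is not nilpotent. The central observation is the monotonicity $\langle x^n,z\rangle\subseteq \langle x,z\rangle$, which is immediate because $x^n\in\langle x\rangle$. Since the Malcev identities defining nilpotency are universal, any subsemigroup of a nilpotent semigroup is nilpotent; if $\langle x,z\rangle$ were nilpotent then its subsemigroup $\langle x^n,z\rangle$ would be too, contradicting the choice of $z$. Hence $\langle x,z\rangle$ is not nilpotent. One must also verify $z\neq x$: otherwise $\langle x^n,z\rangle=\langle x\rangle$ would be commutative and hence nilpotent (of class $1$), again a contradiction.

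Combining these observations, $z$ is adjacent to both $x$ and $x^n$ in ${\mathcal N}_S$, so the path $x$—$z$—$x^n$ places $x^n$ in the same connected component as $x$. Since $n$ was arbitrary, all elements of $\langle x\rangle$ share the connected component of $x$. There is essentially no obstacle to this argument: the whole content is the monotonicity remark that enlarging a two-generator set from $\{x^n,z\}$ to $\{x,z\}$ can only destroy nilpotency, and the role of the hypothesis is solely to guarantee the existence of the mediating neighbour $z$.
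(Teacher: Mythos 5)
Your proof is correct and follows essentially the same route as the paper: use the no-isolated-vertex hypothesis to find a neighbour $z$ of $x^n$, then note $\langle x^n,z\rangle\subseteq\langle x,z\rangle$ forces $\langle x,z\rangle$ to be non-nilpotent, giving the path $x$---$z$---$x^n$. Your extra check that $z\neq x$ (via commutativity of $\langle x\rangle$) is a small point the paper leaves implicit, but the argument is the same.
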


\begin{proof}
Suppose  $ x^n\in \langle x \rangle$ with $x^{n}\neq x$. Because
$x$ and $x^n$  commute, there is no edge between them in
${\mathcal N}_{S}$. As, by assumption, $S$ does not have isolated
vertices,  there exists an element $y\in S$ such that there is an
edge between $x^n$ and $y$ in ${\mathcal N}_{S}$. Clearly,
$\langle x^n, y \rangle \subseteq \langle x, y \rangle$. Thus,
$\langle x, y \rangle$ is not nilpotent and thus there is an edge
between $x$ and $y$. Therefore $x$ and $x^n$ are in the same
connected ${\mathcal N}_{S}$-component.
\end{proof}


\section{Some graphs are not upper non-nilpotent graphs}\label{}

For a positive integer $n$, we denote by  $P_n$ a graph which is a
path on $n$ vertices and by $C_n$ we denote a graph which is a
unique cycle on $n$ vertices. A star graph $S_n$, is a tree with
$n$ vertices such that one vertex (called the center) has degree
$n-1$ and the other $n-1$ vertices (called the terminal vertices)
have degree $1$.

Let $S$ be a semigroup. In  analogy with the group case \cite{abd-zar},  the
set of vertices of the non-nilpotent graph of $S$ which are not
adjacent to the vertex $x$, together with $x$, we call the
\textit{nilpotentizer} of $x$ in $S$. It will be denoted by
$\mbox{nil}_{S}(x)$. So
$$\mbox{nil}_{S}(x) = \{y \in S \mid \langle x, y
\rangle \mbox{ is nilpotent} \} .$$ The \textit{nilpotentizer} of
$S$ is the set
$$\mbox{nil}(S) = \bigcap_{x\in S}
\mbox{nil}_{S}(x) .$$ Thus $\mbox{nil}(S) = \{x \in S \mid \langle
x, y \rangle$ is nilpotent for all $y \in S\}$. Clearly, the
center $Z(S)$ of $S$ is contained in $\mbox{nil} (S)$. The
following lemma is easily verified.

\begin{lem}
If $y \in \mbox{nil}_{S}(x)$ and $z \in Z(S)$, then $yz \in
\mbox{nil}_{S}(x)$.
\end{lem}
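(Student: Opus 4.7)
The plan is to show that $\langle x, yz\rangle$ is nilpotent by reducing its Malcev identities to those of the nilpotent subsemigroup $T=\langle x, y\rangle$. Let $n_0$ denote the nilpotency class of $T$ (which exists because $y\in\mbox{nil}_S(x)$). Since $z\in Z(S)$, it commutes with every element of $S$, in particular with $x$ and $y$, and therefore with every element of $T$ and of $\langle x,yz\rangle$.

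The first step is to put elements of $\langle x, yz\rangle$ into a normal form. Every element is a nonempty product of $x$'s and $(yz)$'s; shuffling the central factor $z$ to the right gives a presentation $wz^k$, where $w$ is obtained by deleting every $z$ from the original product (equivalently, by replacing each $yz$ by $y$), so $w\in T$, and $k\geq 0$. Thus for arbitrary $\alpha,\beta\in\langle x,yz\rangle$ and $c_1,\dots,c_n\in\langle x,yz\rangle^1$ one may write $\alpha=az^i$, $\beta=bz^j$ and $c_t=d_tz^{l_t}$ with $a,b\in T$ and $d_t\in T^1$ (taking $d_t=1$, $l_t=0$ whenever $c_t=1$).

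Next I would expand $\lambda_n(\alpha,\beta,c_1,\dots,c_n)$ and $\rho_n(\alpha,\beta,c_1,\dots,c_n)$ and use centrality of $z$ to shuffle all $z$-factors out to the right, obtaining
\[
\lambda_n(\alpha,\beta,c_1,\dots,c_n)=\lambda_n(a,b,d_1,\dots,d_n)\,z^{N_\lambda},\quad \rho_n(\alpha,\beta,c_1,\dots,c_n)=\rho_n(a,b,d_1,\dots,d_n)\,z^{N_\rho},
\]
where $N_\lambda$ and $N_\rho$ are weighted sums counting how often $x$, $y$ and each $z_t$ occur in $\lambda_n$ and $\rho_n$. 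A short induction on the defining recursion $\lambda_{n+1}=\lambda_nz_{n+1}\rho_n$, $\rho_{n+1}=\rho_nz_{n+1}\lambda_n$ shows that for every $n\geq 1$ the words $\lambda_n$ and $\rho_n$ have identical multisets of variable occurrences, so $N_\lambda=N_\rho$. Choosing $n\geq\max(1,n_0)$, the nilpotency of $T$ gives $\lambda_n(a,b,d_1,\dots,d_n)=\rho_n(a,b,d_1,\dots,d_n)$, and the two displayed expressions coincide. Hence $\langle x,yz\rangle$ is nilpotent and $yz\in\mbox{nil}_S(x)$. The only mildly delicate points are the identical-variable-content observation for $\lambda_n$ and $\rho_n$, and the bookkeeping when some $c_t$ equals the adjoined identity; both are routine.
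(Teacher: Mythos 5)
Your proof is correct; the paper states this lemma without proof (``easily verified''), and your argument supplies exactly the intended verification: centrality of $z$ lets you write every element of $\langle x, yz\rangle$ as $wz^k$ with $w\in\langle x,y\rangle$, and since $\lambda_n$ and $\rho_n$ have the same multiset of variable occurrences for every $n\geq 1$ (note the claim fails for $n=0$, so the restriction to $n\geq 1$ you impose is needed), the exponents of $z$ on the two sides agree while the $T$-parts agree by nilpotency of $\langle x,y\rangle$. The bookkeeping for $c_t=1$ and the fact that the Malcev identity for class $n_0$ implies it for all $n\geq n_0$ are handled correctly.
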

Again in analogy with the group case \cite{abd-zar}, we call a
semigroup $S$ an \textit{$n$-semigroup} if $\mbox{nil}_{S}(x)$ is
a subsemigroup of $S$ for every $x \in S$. In general,
$\mbox{nil}_S(x)$ is not a subsemigroup of $S$ for all $x \in S$.
For example, $\mbox{nil}_S(1_{11})$ in the Rees matrix semigroup
$\mathcal{M}^0(\{1\},2,3;\left(
\begin{matrix} 1 & \theta\\ \theta & 1\\ 1 & \theta
\end{matrix} \right)
)$ is not a subsemigroup. Indeed the  subsemigroups
$\langle 1_{11},$ $ 1_{12} \rangle$ and $\langle 1_{11}, 1_{23}
\rangle$ are nilpotent, but $\langle 1_{11}, 1_{12}1_{23} \rangle$
is not nilpotent.

If $X$ is a graph on at most three vertices then one can
easily verify that $X$ is the upper non-nilpotent graph of a
semigroup. If $\abs{X} = 4$ then this no longer holds, we show now
that $P_4$ is the only exception. Recall that there are 126
semigroups with four elements.

\begin{figure}
\begin{picture}(00,30)(45,40)
\gasset{AHnb=0,Nw=2,Nh=2,Nframe=n,Nfill=y}
\gasset{ExtNL=y,NLdist=1,NLangle= 60}

  \node(0)(60,65){$d$}
  \node(1)(30,65){$a$}
  \node(2)(30,40){$b$}
  \node(3)(60,40){$c$}

  \drawedge(0,1){}
  \drawedge(1,2){}
  \drawedge(2,3){}
\end{picture}
\caption{Graph $P_4$} \label{fig1}
\end{figure}
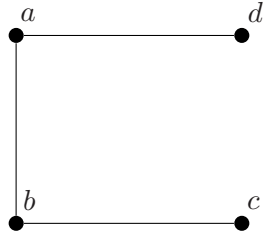

\begin{thm} \label{example-4-element}
The graph $P_{4}$ is not the upper non-nilpotent graph of a
semigroup with $4$ elements.
\end{thm}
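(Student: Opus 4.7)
The plan is to assume for contradiction that a semigroup $S=\{a,b,c,d\}$ with $\mathcal{N}_S$ equal to the path $d-a-b-c$ exists, and to derive a contradiction via a case analysis driven by associativity.

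The first step is to pin down the three nilpotent two-generated subsemigroups $\langle a,c\rangle$, $\langle b,d\rangle$, $\langle c,d\rangle$ corresponding to the non-edges. If one of these, say $\langle a,c\rangle$, contained a third vertex $e\in\{b,d\}$, then $\langle a,e\rangle$ would be a subsemigroup of a nilpotent semigroup, hence nilpotent, contradicting an edge of $P_4$; and if any of them equalled $S$ then $S$ itself would be nilpotent and no edges would exist. So each is a two-element subsemigroup, and since the only non-commutative two-element semigroups are the left- and right-zero semigroups, which are not nilpotent, each is in fact commutative. Intersecting the resulting membership constraints gives $c^2=c$, $d^2=d$, $a^2\in\{a,c\}$, $b^2\in\{b,d\}$, $ac=ca\in\{a,c\}$, $bd=db\in\{b,d\}$ and $cd=dc\in\{c,d\}$.

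Next I would use associativity to cut down the triple $(ac,bd,cd)$. The identities $c(db)=(cd)b$ and $b(dc)=(bd)c$ force $bc=cb$ (either both equal $b$ or both equal $c$) in the cases $(bd,cd)\in\{(b,d),(d,c)\}$, contradicting the edge $\{b,c\}$; hence $(bd,cd)\in\{(b,c),(d,d)\}$. Symmetrically, $a(cd)=(ac)d$ and $d(ca)=(dc)a$ force $ad=da$ in the cases $(ac,cd)\in\{(a,c),(c,d)\}$, contradicting the edge $\{a,d\}$; hence $(ac,cd)\in\{(a,d),(c,c)\}$. Matching the value of $cd$ across the surviving possibilities leaves only $(ac,bd,cd)\in\{(a,d,d),(c,b,c)\}$, and these two configurations are interchanged by the automorphism of $P_4$ swapping $a\leftrightarrow b$ and $c\leftrightarrow d$. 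I may therefore assume $(ac,bd,cd)=(a,d,d)$.

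In this case the element $d$ is two-sided absorbing on the set $\{b,c,d\}$: $xd=dx=d$ for every $x\in\{b,c,d\}$. The final step is to show that under this constraint $\langle b,c\rangle$ cannot be non-nilpotent. If $bc=d$ or $cb=d$, then the absorbing property forces $\lambda_2(b,c,w_1,w_2)=\rho_2(b,c,w_1,w_2)=d$ for every choice of $w_1,w_2\in\langle b,c\rangle^1$, so $\langle b,c\rangle$ is nilpotent of class $2$, contradicting the edge $\{b,c\}$. If $bc,cb\in\{b,c\}$ and they are distinct, then $(ac)b=a(cb)$ or $(bc)a=b(ca)$ forces one of $ab,ba$ to equal $a$, after which $(cb)a=c(ba)$ (respectively $(ab)c=a(bc)$) admits no value for the remaining product that is simultaneously consistent with associativity and the edge $\{a,b\}$: each of the four candidates in $\{a,b,c,d\}$ clashes with one of the already-fixed products $ca$, $cb$, $c^2$ or $cd$. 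Finally, if $bc=a$ or $cb=a$ --- the only remaining way for $a$ to lie in $\langle b,c\rangle$ --- a slightly longer chain of associativity identities, decisively using $(cb)b=c(b^2)$ and $(ac)b=a(cb)$, either collapses two of the four elements or again forces $ab=ba$. Since every sub-case produces a contradiction, no such $4$-element semigroup exists. The main obstacle is bookkeeping: none of the earlier lemmas applies directly since no vertex of $P_4$ is totally connected, so the contradiction must be extracted from the interplay of the three non-edge constraints and associativity.
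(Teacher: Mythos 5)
Your overall strategy is the same as the paper's: identify the three non-edge subsemigroups $\langle a,c\rangle=\{a,c\}$, $\langle b,d\rangle=\{b,d\}$, $\langle c,d\rangle=\{c,d\}$ as commutative two-element subsemigroups, use associativity to cut the possible multiplication tables down to one configuration modulo the path-reversing symmetry, and then grind out a contradiction. Your first step and the reduction to $(ac,bd,cd)=(a,d,d)$ check out; they mirror the paper's Steps 1--2, which work in the symmetric configuration $ac=ca=c$, $cd=dc=c$, $bd=db=b$. Your endgame cases (i) and (ii) are also correct, with one small caveat in (i): to conclude that $\langle b,c\rangle$ is nilpotent you need $\lambda_2=\rho_2$ for \emph{all} pairs of elements of $\langle b,c\rangle$, not only for the generating pair $(b,c)$; since in that case $\langle b,c\rangle\subseteq\{b,c,d\}$ and $d$ is a two-sided zero of $\{b,c,d\}$, this is immediate, but it should be said.

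The genuine weak point is your case (iii), $a\in\{bc,cb\}$, which you assert rather than prove (``a slightly longer chain of associativity identities \ldots either collapses two of the four elements or again forces $ab=ba$''). The case can indeed be closed, but doing so requires additionally pinning down $ad$ and $da$ and is the longest computation in your scheme; as written it is a claim, not an argument. The paper's Step 2 contains the observation that makes this case disappear: since $d$ commutes with both $b$ and $c$, the elements $bc$ and $cb$ commute with $d$ (in your configuration $d(bc)=(bc)d=d$), so if $a\in\{bc,cb\}$ then $ad=da$, making $\langle a,d\rangle$ commutative, hence nilpotent, contradicting the edge $\{a,d\}$. Inserting this two-line remark both fills your gap and deletes your hardest case. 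Beyond that, the two proofs diverge only in bookkeeping: the paper first establishes $\langle a,d\rangle=\{a,d\}$ and $\langle b,c\rangle=\{b,c\}$ and then runs a case analysis on the value of $ab$, whereas you run the case analysis on the values of $bc$ and $cb$; once case (iii) is disposed of, either organization works.
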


\begin{proof} We prove the result by contradiction. So, suppose
$S = \{a, b, c, d \}$ is a semigroup  such that its upper
non-nilpotent graph is as depicted in Figure $1$. The proof is given in six steps. In the first five steps we prove several restrictions that we
may assume to hold in $S$. In step six we prove the final contradiction.

{\it Step 1:} $\langle c, d \rangle = \{c, d\}$, $\langle a, c
\rangle =\{a, c\}$, $\langle b, d \rangle =\{b, d\}$, $cd=dc$,
$ac=ca$, $bd=db$, $c^{2}=c$ and $d^{2}=d$.

Because $\langle c, d \rangle$ is nilpotent and $\langle a, d
\rangle$ is not nilpotent, it follows that $a \not\in \langle c, d
\rangle$. Similarly $b \not\in \langle c, d \rangle$. Hence
$\langle c, d \rangle = \{c, d\}$. We now show that this semigroup
is commutative. Indeed, for otherwise we may assume that $cd=c$
and $dc=d$. But $\lambda_{1}(c,d,1)=c$ and $\rho_{1}(c,d,1)=d$ and
thus, by Lemma~\ref{finite-nilpotent}, $\langle c, d \rangle$ is
not nilpotent, a contradiction. Similarly one obtains that
$\langle a, c \rangle = \{a, c\}$, $ac=ca$ and $\langle b, d
\rangle = \{b, d\}$, $bd = db$. Because $\langle a,d \rangle
\not\subseteq \langle a, c \rangle$, we have that $c^{2}\neq d$
and thus $c^{2}=c$. Similarly $d^{2}=d$.

{\it Step 2:} $ac=ca=c, cd=dc=c$, $bd=db=b$, $b \not\in \{ad,
da\}$ and $a \not\in \{bc, cb\}$.

We first note that $ac=c$ and $cd=d$ is not possible as it would
imply $ad=a(cd)=(ac)d=cd=dc=d(ca)=(dc)a=da$ and thus $\langle a, d
\rangle$ is abelian, a contradiction. Similarly, it is not
possible that $ac=a$ and $cd=c$. Thus we have that either $ac=c$
and $cd=c$, or $ac=a$ and $cd=d$. In the former case we obtain
$$c(ad) = (ca)d = (ac)d = cd = c = ac = a(cd) = a(dc) = (ad)c,$$
$$c(da) = (cd)a = ca= c = dc = d(ac) = (da)c$$ and in the latter
case one obtains $c(ad) = ad = (ad)c$, $c(da) = da = (da)c$. So,
in both cases, it follows that the elements $ad$ and $da$ commute
with $c$. As $\langle b, c \rangle$ is not nilpotent, it thus
follows that  $b \not\in \{ad, da\}$.

Similarly, we obtain that either $bd=b$ and $cd=c$, or $bd=d$ and
$cd=d$. In both cases we also have $a \not\in \{bc, cb\}$ (because
there is an edge between $b$ and $c$).

As a result of the above, we are left with two remaining cases:
 \begin{enumerate}
    \item $ac=ca=c, cd=dc=c$, $bd=db=b$, $b \not\in \{ad, da\}$ and $a \not\in \{bc,
cb\}$,
    \item $ac=ca=a, cd=dc=d$, $bd=db=d$, $b \not\in \{ad, da\}$ and $a \not\in \{bc,
cb\}$.
\end{enumerate}

Because of symmetry reasons we only have to deal with the first
case.

{\it Step 3:} $a \not\in \langle b,c \rangle$ and $b \not\in
\langle a, d \rangle$.

We prove this by
contradiction. So suppose $a \in \langle b, c \rangle$. Then there
 exists $\alpha \in S \backslash \{a\}$ such that $a=b\alpha$
or $a=c\alpha$. Both cases however are impossible as $\alpha \in
\{ b,c,d\}$ and $a\not\in \{bc,cb\}$, $bd=b$, $bb \in \langle b, d
\rangle$, $cd=c$ and $cc=c$. Hence the claim follows. Similarly,
we also get that $b \not\in \langle a, d \rangle$.

{\it Step 4:} $\langle a, d \rangle= \{a, d\}$, $ad\neq da$ and
$a^{2}=a$.

Since
$\langle a, d \rangle$ is not nilpotent,  we obtain that
there exist elements $w, v\in \langle a, d \rangle^{1}$ such that
$\lambda_{2}(a,d,w,v) \neq \rho_{2}(a,d,w,v)$. Because $b \not\in
\langle a, d \rangle$, we have that $\langle a,d\rangle \subseteq
\{ a,c,d\}$ and thus it follows that $\{awd, dwa\}$ must be one of
the following sets: $\{a,c\}$, $\{d,c\}$ or $\{a,d\}$. In the
first and second case, we get that  $c \in
\{\lambda_{1}(a,d,w,v),\; \rho_{1}(a,d,$ $w,v)\}$. As $\langle a,
d\rangle c =c\langle a,d \rangle =\{ c\}$, it then follows that
$\lambda_{2}(a,d,w,v) = \rho_{2}(a,d, $ $w,v) = c$, a
contradiction. Thus, we have $\{awd, dwa\}$ = $\{a, d\}$.

If $awd = a$ (and thus $dwa=d$) then $aw = a$, because $cd=c$ and
$dd=d$. Hence $ad = a$ and, clearly, $w\neq c$. If $w=1$ then also
$da = d$. If $w=a$, then also $a^2=a$ and $da = d$. If $w =d$,
then  it is also clear that $da=d$, because $d^2=d$. Hence,
we have shown that $\{ad, da\} = \{a, d\}$. Similarly,  one
can show that if $dwa = a$ then  $\{ad, da\} = \{a, d\}$.

Hence, we have proved that $\langle a, d \rangle = \{a, d\}$  and
thus $\langle a, d \rangle = \{ad, da\} = \{a, d\}$ (as $\langle
a, d \rangle$ is not nilpotent). It then follows that
$a=ad=a(da)=(ad)a=a^2$ or $a=da=(ad)a=a(da)=a^2$. Thus, we have
also  $a^2 =a$.

{\it Step 5:} $\langle b, c \rangle= \{b, c\}$, $bc\neq cb$ and
$b^{2}=b$.

Because $\langle b,c\rangle$ is not nilpotent there exist elements
$w, v\in \langle b, c \rangle^{1}$ such that $\lambda_{2}(b,c,w,v)
\neq \rho_{2}(b,c,w,v)$. Because $a \not\in \langle b, c \rangle$
we have that $\langle b,c\rangle \subseteq \{ b,c,d\}$ and thus it
follows that $\{bwc, cwb\}$ must be one of the sets: $\{b,d\}$,
$\{c,d\}$ or $\{b,c\}$.

If $\{bwc, cwb\} = \{b,d\}$ then $bw = b, wb = b$,  because
$\{dc,cd,c^2\} = \{c\}$. Hence $\{bc, cb\} = \{bwc, cwb\} = \{b,
d\}$. Therefore we have two cases.  If $bc = b$ and $cb=d$, then
$(cb)c= dc=c$ and $c(bc) = cb=d$, a contradiction. If $bc=d$ and
$cb=b$, then $(cb)c= bc=d$ and $c(bc) = cd=c$, a contradiction. So
the case $\{bwc,cwb\} = \{b,d\}$ is excluded.

If $\{bwc, cwb\} = \{c,d\}$ then, because $\{dc,cd,c^2\} = \{c\}$,
we get that $bw = b$ or $wb = b$. We claim that $d \in \{bc,
cb\}$. Indeed, if $bw=wb$, then $\{bc, cb\} = \{bwc, cwb\} =
\{c,d\}$. On the other hand, if $bw \neq wb$, then, since $\langle
bw,wb\rangle \langle b,c \rangle \subseteq \{ b,c,d\}$, we get
that $bw=b$ and $wb \in \{c, d\}$ and thus  $\{c,d\} = \{bwc,
cwb\} = \{bc, c\}$ and $d = bc$, or  $wb=b$ and $bw \in \{c, d\}$
and we obtain similarly that  $d = cb$. So this proves the claim.
Now, since $\{bwc, cwb\} = \{c,d\}$ then
$\{bwcvcwb, cwbvbwc\} =
\{cvd, dvc\}$.
If $v \in \{d,c, 1\}$ then
$$\{\lambda_{2}(b,c,w,v),
\rho_{2}(b,c,w,v)\} = \{bwcvcwb, cwbvbwc\} =
\{c\} ,$$ a contradiction. Thus $v=b$, and
$$\{\lambda_{2}(b,c,w,v),
\rho_{2}(b,c,w,v)\} = \{bwcvcwb, cwbvbwc\} =
\{cbd, dbc\} = \{cb, bc\} .$$ In particular $bc \neq cb$.  Since $d
\in \{bc, cb\} \subseteq \{b, c, d\}$, we also get that $b$ or $c$
belongs to $\{bc, cb\}$. If $b \in \{bc, cb\}$, then
$$\{bwcbcwb, cwb^3wc\} =\{bwcvcwb, cwbvbwc\} = \{bc, cb\} = \{b, d\}.$$
Thus $cwb^3wc \in \{b, d\}$ and therefore $cwb^3wc \neq c$. As
$c^2=c$ and $cd=c$, we obtain that $wb^3wc=b$. Similarly because
$dc=c$ and $c^2=c$, we get $cwb^3w=b$. Then $cwb^3wc=bc=cb$, a
contradiction with $\langle b, c \rangle$  not being  nilpotent.
Otherwise if $c \in \{bc, cb\}$ then $\{ bc,cb\} =\{ c,d\}$ and is
then readily verified that $(bc)b\neq b(cb)$, a  contradiction.
 So also the case $\{bwc, cwb\} = \{c,d\}$ is excluded.

Finally, we deal with the remaining case $\{bwc, cwb\} = \{b,c\}$.
We claim that $\{cb, bc\} = \{c, b\}$. Indeed recall that $w \in
\langle b, c \rangle ^1$ and $a \not\in \langle b, c \rangle$. If
$w\in \{c,d, 1\}$, then we get $\{b, c\} = \{bwc, cwb\} =\{bc,
cb\}$. If on the other hand $w=b$, then $b \in \{cb^2, b^2c\}$.
Hence, because $cd=dc=c^2=c$, we get $b^2=b$. So again $\{b,c\} =
\{bwc, cwb\}= \{cb, bc\}$. This proves the claim. Calculating
$(bc)b=b(cb)$, it follows that $b^2=b$.

{\it Step 6:} The final contradiction.

Of course $ab\in \{ a,b,c,d\}$. We show that each of the four
possible cases leads to a contradiction.

First assume $ab =a$. Then $ad \neq d$, because otherwise $b= db=
adb = ab=a$, a contradiction. As $\langle a, d \rangle = \{a, d\}$
and $ad \neq da$, we then get that $ad=a, da=d$. But then, $d=
da=dab=db=b$, again a contradiction.

Second assume $ab =b$. Then $ad \neq a$, because otherwise $ab =b=
bd= bda = ba$, in contradiction with $\langle a, b\rangle$ being
non-commutative. Since $\langle a, d \rangle = \{a, d\}$ and since
it is not nilpotent, we then get $ad=d, da=a$. Hence $b=
bd=b(ad)=(ba)d$. Because we also know that $ed=de=e$ for all $e
\in S \backslash \{a\}$, it therefore follows that $ba \neq c, ba
\neq d$. As $\langle b, a \rangle$ is non-commutative, we also get
that $ba\neq b$ (as $ab=b$). Thus $ba=a$. But then $d = ad =bad =
bd =b$, a contradiction.

Third, assume $ab=c$. Recall that $\langle a, d \rangle = \{ad,
da\} = \{a, d \}$. If $ad=d$, then $c = ab = adb = db = b$, a
contradiction. Hence $ad=a$ and $da=d$. Consequently, $c = dc=
dab=db=b$, again a contradiction.

Fourth, assume $ab=d$. Recall that $\langle b, c \rangle = \{bc,
cb\} = \{b, c \}$. If $cb=b$, then $d= ab= acb = cb=b$, a
contradiction. Hence $cb=c$ and $bc=b$. Consequently, $c = dc=abc
=  ab = d$, again a contradiction.

So we have reached in all possible cases a contradiction. Hence
the result follows.

\end{proof}

Note that the following Cayley table gives a semigroup $S$ with
$5$ elements such that its upper non-nilpotent graph has an
induced subgraph  on the set $\{b,c,d,e\}$  as in Figure $1$:
\begin{center}
\begin{tabular}{ c|c c c c c }
   & $a$ & $b$ & $c$ & $d$ & $e$\\ \hline
  $a$ &   $a$ & $a$ & $a$ & $a$ & $a$\\
  $b$ & $a$ & $b$ & $a$ & $a$ & $e$\\
  $c$ & $a$ & $a$ & $c$ & $d$ & $a$\\
  $d$ & $d$ & $d$ & $d$ & $d$ & $d$\\
  $e$ & $e$ & $e$ & $e$ & $e$ & $e$
\end{tabular}
\end{center}
Indeed  $\langle b, d \rangle$, $\langle d, e \rangle$ and
$\langle e, c \rangle$ are not nilpotent, but $\langle b, e
\rangle$, $\langle b, c \rangle$ and $\langle d, c \rangle$ are
nilpotent. Note however that $S \backslash \{a\}$ is not a
subsemigroup of $S$.

In order to show that $P_{4}$ is the only graph that does not show
as an upper non-nilpotent graph of a semigroup of order $4$, we
now first give several classes  of semigroups that are
$n$-semigroups. In each case we include a specific example. It
will follows that every graph on $4$ vertices, except
$P_{4}$, turns out to be the upper non-nilpotent graph of a
semigroup on $4$ elements that is of one of these types.

\begin{example} \label{nilp-examples}
{\rm The following classes of semigroups are all $n$-semigroups.

(1) {\it Semigroups with empty upper non-nilpotent graph.}

If ${\mathcal N}_S$ is an empty graph, then $\mbox{nil}_{S}(x) =
S$ for every $x \in S$. Of course examples of such semigroups
are commutative semigroups.

(2) {\it Semigroups with complete upper non-nilpotent graph.}

 If ${\mathcal N}_S$ is a complete graph then
$\mbox{nil}_{S}(x) = \{x\}$ for every $x \in S$. Because of
Proposition~\ref{complete-graph}, each element $x$ is idempotent
and thus $S$ is an $n$-semigroup. Furthermore, an example of such a semigroup
is a completely simple  band $\mathcal{M}(\{e\}, n, 1; P)$.

(3) {\it Semigroups whose upper non-nilpotent graph contains
only one pair of non-adjacent vertices.}

Suppose that $a$ and $b$ are the only elements of $S$ that are not
connected by an edge. If $x \in S \backslash \{a,b\}$, then the
vertex $x$ is totally connected and thus, by
Lemma~\ref{Complete-degree}, $x$ is idempotent. Hence
$\mbox{nil}_S(x)= \{x\}$ is a subsemigroup. Because $\langle a, b
\rangle$ is nilpotent and $\langle a, x \rangle$ is not nilpotent
for  $x \in S \backslash \{a,b\}$, we get that $x \not\in \langle
a, b \rangle$. Therefore $\langle a, b \rangle = \{a, b\}$ and
also $\mbox{nil}_{S}(a)=\mbox{nil}_S(b)=\langle a, b \rangle$.
Therefore $S$ is an $n$-semigroup. As an example one can take the
semigroup $T$ that is the disjoint union of the semigroup
$\mathcal{M}(\{e\}, n, 1; P)$ (with $n>1$) and the trivial group
$\{1\}$ and such that the following relations are satisfied:
$1x=e_{11}$ and $x1=x$ for every $x \in \mathcal{M}(\{e\}, n, 1;
P)$. Only between $1$ and $e_{11}$, there is no edge in
$\mathcal{N}_{T}$. Indeed, by Proposition~\ref{complete-graph},
${\mathcal N}_{\mathcal{M}(\{e\}, n, 1; P)}$ is a complete
subgraph. Since $1e_{11} = e_{11}1$, there is no edge between $1$
and $e_{11}$. If $ v \in \mathcal{M}(\{e\}, n, 1; P) \backslash
\{e_{11}\}$, then $\langle e_{11}, v \rangle \subseteq \langle 1,
v \rangle$. As $\langle e_{11}, v \rangle$ is not nilpotent, also
$\langle 1, v \rangle$ is  not nilpotent. Hence, there is an edge
between $v$ and $1$ in  ${\mathcal N}_{\mathcal{M}(\{e\}, n, 1; P)
\cup \{1\}}$.

(4) {\it Semigroups such that their upper non-nilpotent graph is a
disjoint union of a complete graph and one isolated vertex.}

Indeed, suppose that $x$ is an isolated vertex. Of course the
nilpotentizer of $x$ is $S$. If $y \in S \backslash \{x \}$ then
$\mbox{nil}_S(y)= \{x,y\}$. If $z \in S \backslash \{x, y\}$ then
$\langle y, z \rangle$ is not nilpotent and $\langle x, y \rangle$
is nilpotent. Hence $z \not\in \langle x, y \rangle$ and thus
$\langle x, y \rangle \cap S \backslash \{x, y\} = \emptyset$.
Therefore $\langle x, y \rangle = \{x, y\}$ and also
$\mbox{nil}_S(y)$ is a subsemigroup of $S$. Therefore, $S$ is an
$n$-semigroup. An example of such a semigroup is
$\mathcal{M}(\{e\}, n, 1; P)\; \angle \; S_1$ with $n>1$ and
$\abs{S_1}=1$.  By Proposition~\ref{complete-graph}, ${\mathcal
N}_{\mathcal{M}(\{e\}, n, 1; P)}$ is a complete subgraph and by
Lemma~\ref{total ideal}, there is no edge between
$\mathcal{M}(\{e\}, n, 1; P)$ and $S_1$ in ${\mathcal
N}_{\mathcal{M}(\{e\}, n, 1; P) \; \angle \; S_1}$.

(5) {\it Finite semigroups with complete connected upper
non-nilpotent components  such that each connected component
has more than one element.}

Because of Corollary~\ref{semilattice-no-isolate-finite} such a
semigroup is of the form  $S=S_{\sigma (1)} \; \angle \cdots \;
\angle \; $ $S_{\sigma (n)}$, where  $\sigma \in Sym_{n}$ and the
connected components are $S_{\sigma (i)}$, $1\leq i \leq n$.
Because of Theorem~\ref{semilattice-no-isolate}, $S$ is a band. If
$x\in S_{\sigma (i)}$, then clearly $\mbox{nil}_S(x)= \{x\} \cup
(S \backslash S_{\sigma (i)})$. It can be easily verified that
this is a subsemigroup of $S$. Therefore $S$ is an $n$-semigroup.

(6) {\it Semigroups such that their upper non-nilpotent graph is a
star graph.}

If $a$ is the center of the graph  ${\mathcal N}_S$ of such a
semigroup then, by Lemma~\ref{Complete-degree}, $a$ is idempotent.
Hence, $\mbox{nil}_S(a)= \{a \}$ is a subsemigroup. Between $a$
and any terminal element there is an edge. But between terminal
elements of $S$, say $c$ and $d$, there is no edge. Hence $cd\neq
a$, because otherwise $\langle a,c\rangle \subseteq \langle
c,d\rangle$. This yields a contradiction as the former
subsemigroup is not nilpotent while the latter is nilpotent.
Therefore $S \backslash \{a\}$ is a subsemigroup and $S$ is an
$n$-semigroup. An example of such a semigroup is $T_{n}=\{
x_{0},x_{1},\cdots , x_{n}\}$ ($n\geq 1$) with multiplication
defined by $x_{0}x_{i}=x_{0}$ and $x_{j}x_{i}=x_{1}$ for all $i$
and all $j\neq 0$.  Since $x_0x_1=x_0, x_1x_0=x_1$, we have
$x_0=\lambda_1(x_0,x_1,1)$ and $x_1=\rho_1(x_0,x_1,1)$
which implies the existence of an edge {in
${\mathcal{N}}_{T_{n}}$ between $x_{0}$ and $x_{1}$.} Moreover, since
$\langle x_1, x_0\rangle \subseteq \langle x_j,
x_0\rangle$ for $1 \leq j \leq n$ and $\langle x_1, x_0\rangle$ is
not nilpotent, $\langle x_j, x_0\rangle$ is not nilpotent and
there is an edge between $x_0$ and $x_j$ in ${\mathcal N}_{T_n}$.
As $x_jx_k=x_kx_j=x_1$ for $1 \leq j, k \leq n$ and $j \neq
k$, there is no edge between $x_k$ and $x_j$ in ${\mathcal
N}_{T_n}$. Therefore ${\mathcal N}_{T_n}$ is the star graph and
its center is the element $x_{0}$.

(7) {\it Semigroups such that their upper non-nilpotent graph
is a disjoint union of a star graph and one isolated
vertex.}

Let $a$ be the center of the star subgraph of such a semigroup $S$
and let $c$ be an isolated vertex. If $b\not\in \{ a,c\}$ then
$\langle a,b\rangle$ is not nilpotent while $\langle a,c \rangle$
is nilpotent. Hence $b\not\in \langle a,c \rangle$. Therefore,
$\mbox{nil}_S(a) = \{ a, c\} = \langle a, c \rangle$ is a
subsemigroup. It can also be easily verified  that the
nilpotentizer of  terminal elements of the star subgraph of $S$
are subsemigroups. Since also $\mbox{nil}_S(c)=S$ is a semigroup
we obtain that $S$ is an $n$-semigroup. An example of such a
semigroup  is $T_n \; \angle \; S_1$ with $\abs{S_1}=1$.

(8) {\it Semigroups such that their upper non-nilpotent graph
is $C_n$ with $n\leq 4$.}

In Theorem~\ref{C_n} it is shown that
$n\geq 5$ can not occur. If $n \leq 3$, then $C_n$ is a complete
graph and thus by (2) this semigroup is an $n$-semigroup. If $n=4$
then the statement is also easy to verify. The upper
non-nilpotent graphs of the semigroups with the following Cayley
tables
\begin{center}
 \begin{tabular}{ c|
c c c }
  & $a$ & $b$ & $c$\\ \hline
 $a$ &  $a$ & $a$ & $a$\\
  $b$ & $b$ & $b$ & $b$\\
 $c$ &  $c$ & $c$ & $c$
\end{tabular} \hspace{1cm} and \hspace{1cm}
\begin{tabular}{c| c c c c }
  & $a$ & $b$ & $c$ & $d$\\ \hline
  $a$   & $b$ & $b$ & $a$ & $b$\\
  $b$   & $b$ & $b$ & $b$ & $b$\\
  $c$   & $d$ & $d$ & $c$ & $d$\\
  $d$   & $d$ & $d$ & $d$ & $d$
\end{tabular}
\end{center}
are $C_3$ and $C_4$ respectively.}
\end{example}

\begin{figure}
\begin{picture}(00,30)(45,40)
\gasset{AHnb=0,Nw=2,Nh=2,Nframe=n,Nfill=y}
\gasset{ExtNL=y,NLdist=1,NLangle= 60}

  \node(0)(90,65){$b$}
  \node(1)(60,65){$a$}
  \node(2)(60,40){$c$}
  \node(3)(90,40){$d$}

  \node(4)(30,65){$b$}
  \node(5)(00,65){$a$}
  \node(6)(00,40){$c$}
  \node(7)(30,40){$d$}

  \drawedge(0,1){}
  \drawedge(0,3){}
  \drawedge(1,3){}
  \drawedge(1,2){}

  \drawedge(4,5){}

\end{picture}
\caption{} \label{fig2}
\end{figure}

\begin{prop} \label{main-converse}
Let $X$ be a graph with at most $4$ vertices. If $X\neq P_{4}$
then there exists a semigroup $S$ with $X$ as upper non-nilpotent
graph. Moreover, all such semigroups are $n$-semigroups.
\end{prop}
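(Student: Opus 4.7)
The plan is a finite case analysis: for every graph $X$ with $\abs{X}\leq 4$, $X\neq P_4$, I would exhibit a semigroup $S$ realising $X={\mathcal N}_S$ from one of the classes listed in Example~\ref{nilp-examples}, all of which are already shown there to be $n$-semigroups, so that existence and the ``moreover'' assertion can be established simultaneously. Combined with Theorem~\ref{example-4-element} (which rules out $P_4$), this completes the classification.

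For $\abs{X}\leq 3$ only a handful of isomorphism types exist and each is realised directly: the empty graphs by any commutative semigroup as in part~(1); a single edge plus isolated points by parts~(4) or~(7); $P_3=K_{1,2}$ by a star as in~(6) with $T_2$; and $K_3$ by part~(2) with $\mathcal{M}(\{e\},3,1;P)$. For $\abs{X}=4$ there are, up to isomorphism, eleven graphs; removing $P_4$ leaves ten. I would match them against Example~\ref{nilp-examples} as follows: the empty graph from~(1); $K_4$ from~(2) using $\mathcal{M}(\{e\},4,1;P)$; the diamond $K_4-e$ from~(3); the triangle with an isolated vertex from~(4); the matching (two disjoint edges) from~(5) using $\mathcal{M}(\{e\},2,1;P)\,\angle\,\mathcal{M}(\{e\},2,1;P)$; the star $K_{1,3}$ from~(6) with $T_3$; $P_3$ together with an isolated vertex from~(7); and $C_4$ from~(8). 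The two remaining graphs are the paw graph and the single-edge-on-four-vertices graph depicted in Figure~\ref{fig2}; for each, I would write down an explicit four-element Cayley table, check associativity, and verify via Lemma~\ref{finite-nilpotent} that $\langle x,y\rangle$ is non-nilpotent precisely for the adjacent pairs $(x,y)$. For those two sporadic semigroups the nilpotentiser of each element can be read off the graph and checked directly to be a subsemigroup, yielding the $n$-semigroup conclusion.

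The main obstacle is not conceptual but combinatorial: one must make sure that all ten four-vertex isomorphism classes are accounted for and produce concrete multiplication tables for the two sporadic cases of Figure~\ref{fig2}. The remaining eight cases are immediate from Example~\ref{nilp-examples}, which also supplies the $n$-semigroup claim in every subcase in which it applies.
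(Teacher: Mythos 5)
Your existence argument follows the paper's route exactly: match each of the ten admissible four-vertex isomorphism types (and the small cases) against the classes of Example~\ref{nilp-examples}, and handle the two leftover graphs of Figure~\ref{fig2} --- the paw and the single edge with two isolated vertices --- by explicit Cayley tables. Your enumeration of the eleven four-vertex graphs is complete and your assignments to classes (1)--(8) are the right ones. Note that those classes are defined by properties of ${\mathcal N}_S$ itself, so \emph{every} semigroup whose graph is of one of those shapes automatically lies in the corresponding class; that is why the Example settles both existence and the $n$-semigroup claim for eight of the ten graphs.

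The gap is in the ``moreover'' clause for the two Figure~\ref{fig2} graphs. The proposition asserts that \emph{all} semigroups $S$ with ${\mathcal N}_S$ equal to one of these graphs are $n$-semigroups, but you only propose to check the nilpotentizers of the two particular semigroups you construct. The underlying \emph{set} $\mbox{nil}_S(x)$ can indeed be read off the graph, but whether it is closed under multiplication depends on the Cayley table, which is not determined by the graph; so a verification on one example proves nothing about the others. This is exactly where the paper does its real work: for an arbitrary $4$-element semigroup realizing, say, the single-edge graph, it must \emph{derive} structural facts such as $cd\neq a$, $\{a,b\}\cap\langle c,d\rangle=\emptyset$ and $\langle c,d\rangle=\{c,d\}$ from the nilpotence/non-nilpotence pattern alone (via a fairly long contradiction argument using Lemma~\ref{finite-nilpotent} and Lemma~\ref{Complete-degree}), before concluding that $\mbox{nil}_S(a)=\{a,c,d\}$ and $\mbox{nil}_S(b)=\{b,c,d\}$ are subsemigroups; a parallel argument is needed for the paw graph. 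Your plan as written omits this entirely, so the second assertion of the proposition is not established.
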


\begin{proof}
Let $X$ be a graph with at most $4$ vertices.  It can be
easily verified that if $X$ is not as in one of the graphs given
in Figure 2, then $X$ can be obtained as the non-nilpotent graph
of one of the semigroup types given in
Example~\ref{nilp-examples}. Furthermore, all semigroups with
upper non-nilpotent graphs of one of these types are
$n$-semigroups.

It can also  be easily verified that the  graphs in Figure 2 can
be obtained as the upper non-nilpotent graphs of the
following semigroups  with respective Cayley tables
\begin{center}
\begin{tabular}{c| c c c c }
  & $a$ & $b$ & $c$ & $d$\\ \hline
  $a$ & $a$ & $a$ & $a$ & $a$ \\
  $b$ & $a$ & $b$ & $a$ & $a$ \\
  $c$ & $a$ & $a$ & $c$ & $c$ \\
  $d$ & $a$ & $a$ & $d$ & $d$ \\
\end{tabular}
\hspace{1 cm} and \hspace{1 cm}
\begin{tabular}{c| c c c c }
  & $a$ & $b$ & $c$ & $d$\\ \hline
  $a$ & $a$ & $a$ & $a$ & $a$ \\
  $b$ & $b$ & $b$ & $b$ & $b$ \\
  $c$ & $b$ & $b$ & $c$ & $d$ \\
  $d$ & $d$ & $d$ & $d$ & $d$ \\
\end{tabular}
\end{center}
These semigroups are $n$-semigroups. So it remains to show that
all semigroups with upper non-nilpotent graphs as in Figure
2 are $n$-semigroups. Let $S=\{ a,b,c,d\}$  be such a
semigroup.

First we deal with the case  when  ${\mathcal N}_S$ is as the
graph depicted on the right in Figure $2$.  Since  $a$ is totally
connected, $a$ is idempotent by Lemma~\ref{Complete-degree}. Hence
$\mbox{nil}_S(a)= \{a\}$  is a subsemigroup. The subsemigroups
$\langle a, b \rangle$ and $\langle b, d \rangle$ are not
nilpotent, while $\langle b, c \rangle$ is nilpotent. Hence $a$
and $d$ are not in $\langle b, c \rangle$. Therefore,
$\mbox{nil}_S(b)= \{b,c\} =\langle b,c\rangle$ is a subsemigroup.
Because $\langle a, d \rangle$ and $\langle b, d \rangle$ are not
nilpotent, but $\langle c, d \rangle$ is nilpotent, we get that
$a$ and $b$ are not in $\langle c, d \rangle$. Hence $\langle c, d
\rangle = \{c, d\}$. Therefore $\mbox{nil}_S(d)$ is a
subsemigroup.

Since the order of both subsemigroups $\langle b, c \rangle$
and $\langle c, d \rangle$ is two and because  these semigroups
are nilpotent,  one can easily verify that  $cd=dc$ and
$bc=cb$. We claim that $bd\neq a$. Indeed, because otherwise we
have $ac = bdc = cbd = ca$, in contradiction with $\langle a,c
\rangle$ not being nilpotent. Similarly $db \neq a$. Hence $a
\not\in \{bd, db\}.$ As also $\langle b, c \rangle = \{b, c\}$ and
$\langle c, d \rangle = \{c, d\}$, we obtain  that
$\mbox{nil}_S(c) = \{c, b, d\}$ is a subsemigroup. Therefore $S$
is an $n$-semigroup.

Next we deal with the case  when  ${\mathcal N}_{S}$ is  the
graph depicted on the left in Figure $2$.  We claim $cd \neq a$.
We prove this by contraction.  Assume $cd=a$. Because there
is an edge between $a$ and $b$, but there is no edge between $b$
and $d$ and $cd=a$,  $a$ and $c$ are not in $\langle b, d
\rangle$. Hence $\langle b, d \rangle = \{b, d\}$  and since
$\langle b, d \rangle$ is nilpotent,  $bd=db$. Also, as there is
no edge between $a$ and $d$, we have that $b \not\in \langle a, d
\rangle$.  Since $\langle b, d \rangle = \{b, d\}$, we have
$d^2=d$. Similarly $\langle b, c \rangle = \{b, c\}$, $bc=cb$,
$c^2=c$. Because $c$ and $d$ are idempotent and $cd=a$, we get
$ad=a$ and $ca=a$.

Because there is no edge between $c$ and $d$ and $cd=a$, one has
$b\not\in \langle d, c \rangle$. Hence $dc \neq b$. If $dc=d$,
then $dcd=d$ and $da=d$.  Therefore $a=\lambda_1(a,d,1)$ and
$d=\rho_1(a,d,1)$. Hence, by Lemma~\ref{finite-nilpotent},
$\langle a, d \rangle$ is not nilpotent, a contradiction. So
$dc\neq d$ and, similarly, $dc \neq c$. Therefore $dc=a$. Because
$c$ and $d$ are idempotent and $dc=a$, we have $da=a$ and $ac=a$.

So we have $ac=ca=ad=da=cd=dc=a$, $bd=db$ and $bc=cb$.
Consequently, $dba=bda=ba$ and $cba=bca=ba$. If $ba=c$ then
$c= ba=dba=dc=a$,  a contradiction. If $ba=d$ then $d=
ba=cba=cd=a$, a contradiction too. Thus $ba\in \{a,b\}$. Similarly
$ab\in \{a,b\}$. As there is an edge between $a$ and $b$, we have
that $ab \neq ba$ and thus $\{ab, ba\}= \{a,b\}$.

Suppose $ab=a$ and  $ba=b$. Then $db=dba=bda=ba=b$ and
$cb=cba=bca=ba=b$ and thus $b=cb=c(db)=(cd)b=ab=a$, a
contradiction. Similarly, $ab=b$ and $ba=a$ lead to a
contradiction.

This proves the claim that $cd \neq a$.  Similarly $\{a,b\}
\cap \langle c, d \rangle = \emptyset$.  Consequently $\langle
c,d\rangle =\{ c, d\}$. It then can be  easily verified from the graph of
 $S$ that $\mbox{nil}_{S}(a)=\{a,c,d\}$ and
$\mbox{nil}_{S}(b)=\{b,c,d\}$ are subsemigroups. Therefore $S$ is
an $n$-semigroup.
\end{proof}

\begin{cor}
If $X$ is a graph with $4$ vertices then there exists a semigroup
$S$ with $4$ elements  such that $X = {\mathcal N}_S$ if and
only if $X \neq P_4$.
\end{cor}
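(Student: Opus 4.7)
The plan is to read the corollary directly off Theorem~\ref{example-4-element} and Proposition~\ref{main-converse}, which together cover the two directions of the equivalence; no new argument is required.

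For the forward implication, suppose there is a semigroup $S$ with four elements whose upper non-nilpotent graph equals $X$. By Theorem~\ref{example-4-element}, $P_4$ cannot be realized as the upper non-nilpotent graph of any semigroup with four elements, so $X \neq P_4$.

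For the converse, assume $X$ is a graph on four vertices with $X \neq P_4$. Proposition~\ref{main-converse} then produces a semigroup $S$ (in fact an $n$-semigroup, though this is irrelevant here) with ${\mathcal N}_S = X$. Since, by Definition~\ref{non-nilpotent-graph}, the vertex set of ${\mathcal N}_S$ is $S$ itself, the number of elements of $S$ equals the number of vertices of $X$, namely four.

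There is no real obstacle: the corollary is an immediate packaging of the two earlier results. The only small point worth flagging is that Proposition~\ref{main-converse} is stated for graphs on at most four vertices, so one must simply observe that in the four-vertex case its conclusion automatically yields a four-element semigroup, matching the hypothesis of the corollary.
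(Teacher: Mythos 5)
Your proposal is correct and follows exactly the paper's own argument, which likewise derives the corollary immediately from Theorem~\ref{example-4-element} and Proposition~\ref{main-converse} (together with Example~\ref{nilp-examples}, on which the proposition's proof rests). Your observation that the vertex set of ${\mathcal N}_S$ is $S$ itself, so the four-vertex case automatically yields a four-element semigroup, is the right way to close the only small gap.
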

\begin{proof}
This follows  immediately from
Theorem~\ref{example-4-element}, Example~\ref{nilp-examples} and
Proposition~\ref{main-converse}.
\end{proof}

Next we show that cycle graphs  with at least 5 vertices can
not be upper non-nilpotent graphs of semigroups.

\begin{thm} \label{C_n}
If $S$ is a finite semigroup of order $n \geq 5$ then ${\mathcal
N}_S \neq C_n$.
\end{thm}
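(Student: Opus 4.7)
The plan is to derive a contradiction in three stages. Assume for contradiction that ${\mathcal N}_S = C_n$ for some $n\geq 5$. Label the vertices as $v_1,\ldots,v_n$ with edges $v_iv_{i+1}$ (indices modulo $n$), so that $\mbox{nil}_S(v_i)=S\setminus\{v_{i-1},v_{i+1}\}$ consists of $v_i$ together with all of its $C_n$-non-neighbours.

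First, I would show that $S$ is a band. Fix $v_i$ and $k\geq 1$. For every non-neighbour $u$ of $v_i$ the subsemigroup $\langle u,v_i\rangle$ is nilpotent, hence $\langle u,v_i^k\rangle\subseteq\langle u,v_i\rangle$ is nilpotent too, so $(u,v_i^k)$ is a non-edge of ${\mathcal N}_S$. Writing $v_i^k=v_j$, this forces both cycle-neighbours $v_{j-1},v_{j+1}$ of $v_j$ to lie in $\{v_i,v_{i-1},v_{i+1}\}$; a direct check of cyclic indices shows that for $n\geq 5$ the only $j$ satisfying this is $j=i$. Therefore $v_i^k=v_i$ for every $k\geq 1$; in particular $v_i^2=v_i$, and $S$ is a band.

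Next I would exploit the band structure. By Lemma~\ref{idempotent-commute}(1) we have ${\mathcal M}_S={\mathcal N}_S=C_n$, so two distinct elements commute iff they are non-adjacent in $C_n$. Each ${\mathcal D}$-class of $S$ is a rectangular band in which distinct elements fail to commute, hence is a clique of ${\mathcal M}_S=C_n$; the triangle-freeness of $C_n$ bounds ${\mathcal D}$-classes in size by two. Moreover, for every non-edge $\{v_i,v_j\}$ the subsemigroup $\langle v_i,v_j\rangle$ is a nilpotent sub-band, which is necessarily a semilattice, so $v_iv_j=v_jv_i$ is an idempotent lying below both $v_i$ and $v_j$ in the natural partial order $x\leq y\iff xy=yx=x$ of the band, and it belongs to $\mbox{nil}_S(v_i)\cap\mbox{nil}_S(v_j)$.

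The third step is to extract the contradiction. For $n=5$ a direct computation shows $\mbox{nil}_S(v_i)\cap\mbox{nil}_S(v_j)=\{v_i,v_j\}$ for every non-edge, so $v_iv_j\in\{v_i,v_j\}$; this orients each non-edge of $C_5$ according to whether $v_i\leq v_j$ or $v_j\leq v_i$. If some vertex of the complement $\overline{C_5}$ had both an incoming and an outgoing edge in this orientation, then by transitivity of $\leq$ we would obtain $v_i\leq v_k$ for a pair at distance two in $\overline{C_5}$, but such a pair is adjacent in the original $C_5$, forcing $v_i$ and $v_k$ to commute contrary to the edge between them in ${\mathcal M}_S$. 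Therefore every vertex of $\overline{C_5}$ is a source or a sink, which would make $\overline{C_5}$ bipartite; since $\overline{C_5}$ is itself a $5$-cycle this is impossible. For general $n\geq 6$ the same orientation strategy handles those non-edges whose nilpotentizer intersection reduces to the pair itself (notably the antipodal pairs when $n$ is even), and the remaining non-edges are handled by running the analysis of Theorem~\ref{example-4-element} on the $P_4$ induced by any four consecutive cycle-vertices, using the additional cycle-edges flanking the $P_4$ to supply the nilpotency hypotheses that localise the intermediate products. The main obstacle is this localisation: for $n\geq 7$ a commuting product $v_iv_j$ may a priori equal a ``far'' vertex $v_k$, and ruling out such an escape requires combining the $P_4$-analysis with the nilpotency of several auxiliary subsemigroups to forbid $v_k$ from being ${\mathcal N}_S$-adjacent to both $v_i$ and $v_j$.
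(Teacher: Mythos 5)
Your first stage (that $S$ is a band) is correct and is essentially the paper's own opening move, and your $n=5$ argument is a genuinely different and valid route: for every non-edge of $C_5$ one has $\mbox{nil}_S(v_i)\cap\mbox{nil}_S(v_j)=\{v_i,v_j\}$, so the product of a commuting pair stays inside the pair, the natural order of the band orients $\overline{C_5}$, transitivity forces every vertex to be a source or a sink, and the non-bipartiteness of $\overline{C_5}\cong C_5$ gives the contradiction. That part works.

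The gap is the case $n\geq 6$, which you do not actually prove. There the intersection $\mbox{nil}_S(v_i)\cap\mbox{nil}_S(v_j)$ attached to a non-edge has $n-3$ or $n-4$ elements, so a commuting product $v_iv_j$ can a priori land on a ``far'' vertex; your text explicitly defers this (``ruling out such an escape requires combining the $P_4$-analysis with the nilpotency of several auxiliary subsemigroups'') without carrying it out. The orientation trick also breaks down intrinsically for $n\geq 6$: two $\overline{C_n}$-neighbours of a vertex need not be adjacent in $C_n$, so an in-edge plus an out-edge no longer produces a forbidden commuting pair, and $\overline{C_n}$ contains triangles, so no bipartiteness obstruction is available. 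The paper closes exactly this hole by working with \emph{adjacent} pairs instead of non-edges: it shows $\langle a_i,a_{i+1}\rangle=\{a_i,a_{i+1}\}$ by observing that any common non-neighbour of $a_i$ and $a_{i+1}$ commutes with $a_ia_{i+1}$, which forbids $a_ia_{i+1}$ from being any vertex adjacent to such a non-neighbour (for $n=5$ this leaves only $\{a_i,a_{i+1},a_{i-2}\}$ and a short computation eliminates $a_{i-2}$; for $n\geq 6$, if $a_ia_{i+1}=a_j$ with $j\notin\{i,i+1\}$ then one of $a_{j\pm1}$ is a common non-neighbour of $a_i$ and $a_{i+1}$, giving an immediate contradiction). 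A consistency check on the products $a_ia_{i+1}$ then shows $\{a_i,a_{i+1},a_{i+2}\}$ is a three-element subsemigroup whose lower non-nilpotent graph is connected but not complete, contradicting Proposition~\ref{connected-complete}(2). You would need to supply an argument of comparable precision for $n\geq 6$ before your proof can be considered complete.
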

\begin{proof}
Suppose that the semigroup $S = \{a_1, \cdots, a_n \}$ is such
that ${\mathcal N}_S = C_n$, and there are edges between $a_{i}$
and $a_{i+1}$ for $1 \leq i \leq n$ (the addition  used in the
indices has to be interpreted modulo $n$).

First we show that $S$ is a band. Indeed, suppose
$a_{i}^{2}=a_{j}$. Clearly $\langle a_{j+1},\; a_{j} \rangle =
\langle a_{j+1},\; a_{i}^2 \rangle \subseteq \langle a_{j+1},
a_{i} \rangle$ and $\langle a_{j-1},\; a_{j} \rangle = \langle
a_{j-1},\; a_{i}^2 \rangle \subseteq \langle a_{j-1}, a_{i}
\rangle$. Because $\langle a_{j+1},\; a_{j} \rangle$ and $\langle
a_{j-1},\; a_{j} \rangle$ are not nilpotent, we get that $\langle
a_{j+1}, a_{i} \rangle$ and $\langle a_{j-1}, a_{i} \rangle$ are
both  not nilpotent. Hence $j=i$ and thus $S$ is band, because $n
\geq 5$. Because of Lemma~\ref{idempotent-commute}, we then have
that ${\mathcal M}_S = {\mathcal N}_S$.

We claim that, $\langle a_{i}, a_{i+1} \rangle = \{a_{i},
a_{i+1}\}$. Since $n\geq 5$ there is no edge between $a_{{i-2}}$
and any of the elements $a_{i}$  and $a_{{i+1}}$. Hence,
$a_{i-2}$ commutes with each of these elements and thus also with
$a_{i}a_{{i+1}}$. It follows that $a_{i}a_{i+1} \not\in \{
a_{i-1}, a_{i-3} \}$.

If $n=5$ it then follows that  $\{a_{i+1}a_{i}, a_{i}a_{i+1}\}
\subseteq \{a_i, a_{i+1}, a_{i-2}\}$. If $a_{i}a_{i+1} = a_{i-2}$,
then $a_{i+1}a_{i} \in \{a_{i}, a_{{i+1}}\}$, because
$a_{i+1}a_{i} \neq a_{i}a_{i+1}$. We suppose $a_{i+1}a_{i} =
a_{i+1}$. Then we have
$$a_{i-2}a_{i}=(a_{i}a_{i+1})a_{i} = a_{i}(a_{i+1}a_{i}) =
a_{i}a_{i+1} = a_{i-2},$$
$$a_{i-2}a_{i+1}= a_{i+1}a_{i-2}=a_{i+1}(a_{i}a_{i+1}) = (a_{i+1}a_{i})a_{i+1} =
a_{i+1}a_{i+1} = a_{i+1}.$$ Then
$$a_{i+1} = a_{i-2}a_{i+1}= (a_{i-2}a_{i})a_{i+1} = a_{i-2}(a_{i}a_{i+1}) =
a_{i-2}a_{i-2} = a_{i-2},$$ a contradiction. Similarly
$a_{i+1}a_{i} = a_{i}$ yields a  contradiction. Therefore
$a_{i}a_{i+1} \in \langle a_{i}, a_{{i+1}} \rangle$. Similarly
$a_{i+1}a_{i} \in \langle a_{i}, a_{{i+1}} \rangle$. Then $\langle
a_{i}, a_{{i+1}} \rangle = \{a_{i}, a_{{i+1}}\}$.

Now we suppose that $n \geq 6$ and $a_{i}a_{i+1} = a_{j}$ such
that $j \not\in \{i, {i+1}\}$. Because $n \geq 6$, there exists
element $a^\star \in \{a_{j+1}, a_{j-1}\}$ such that $a^\star$ is
not adjacent to $a_{i}$ and to $a_{i+1}$. Which leads $a^\star
a_{i}a_{i+1}=a_{i}a_{i+1}a^\star$. Therefore $a_{j}a^\star=a^\star
a_{j}$, a contradiction. Then we have $\langle a_{i}, a_{i+1}
\rangle = \{a_{i}, a_{i+1}\}$.

Since the subsemigroup $\langle a_{i}, a_{i+1} \rangle$ is
not nilpotent, we thus obtain that $\{a_{i}a_{i+1}, a_{i+1}a_{i}\}
= \{a_{i},\; a_{i+1}\}$. We claim that if  $a_{i}a_{i+1}= a_{i}$
then $a_{i+1}a_{i+2}= a_{i+1}$. Indeed, for otherwise we  have
$$a_{i}a_{i+1}= a_{i}, a_{i+1}a_{i}= a_{i+1}, a_{i+1}a_{i+2}=
a_{i+2}, a_{i+2}a_{i+1}= a_{i+1}.$$ Note that the elements $a_{i}$
and $a_{i+2}$ commute. Hence, we get
$$a_{i} = a_{i}a_{i+1} = a_{i}a_{i+2}a_{i+1} =
a_{i+2}a_{i}a_{i+1} = a_{i+2}a_{i} =$$
$$a_{i+1}a_{i+2}a_{i} = a_{i+1}a_{i}a_{i+2} =
a_{i+1}a_{i+2} = a_{i+2},$$ a contradiction. This proves the
claim.

Suppose now that $a_{i}a_{i+1}=a_{i}$. Since $a_{i}a_{i+1}=a_{i}$,
we have  from the above that $a_{i+1}a_{i+2}=a_{i+1}$ and so
$a_{i}a_{i+2}=a_{i}a_{i+1}a_{i+2}=a_{i}a_{i+1}=a_{i}$. Because
$a_{i}a_{i+1}=a_{i}$, $a_{i+1}a_{i+2}=a_{i+1}$, $a_{i}a_{i+1} \neq
a_{i+1}a_{i}$, $a_{i+1}a_{i+2} \neq a_{i+2}a_{i+1}$, $\langle
a_{i}, a_{i+1} \rangle = \{a_{i}, a_{i+1} \}$ and $\langle
a_{i+1}, a_{i+2}\rangle = \{a_{i+1}, a_{i+2}\}$, we get that
$a_{i+1}a_{i}=a_{i+1}$ and $a_{i+2}a_{i+1}=a_{i+2}$. Hence,
$a_{i+2}a_{i}=a_{i+2}a_{i+1}a_{i}=a_{i+2}a_{i+1}=a_{i+2}$ and thus
$\langle a_{i}, a_{i+2} \rangle = \{a_{i}, a_{i+2}\}$.

Similarly if $a_{i}a_{i+1}=a_{i+1}$, we have $\langle a_{i},
a_{i+2} \rangle = \{a_{i}, a_{i+2}\}$.

The above information shows that $ \{a_i, a_{i+1}, a_{i+2}\}$ is a
subsemigroup.  Since $a_i = \lambda_1(a_i, a_{i+1}, 1), a_{i+1} =
\rho_1(a_i, a_{i+1}, 1)$ or $a_i = \lambda_2(a_i, a_{i+1}, 1, 1),
a_{i+1} = \rho_2(a_i, $ $a_{i+1}, 1, 1)$, there is an edge between
$a_i$ and $a_{i+1}$ in ${\mathcal L}_{\{a_i, a_{i+1}, a_{i+2}\}}$.
Similarly there is an edge between $a_{i+1}$ and $a_{i+2}$ in
${\mathcal L}_{\{a_i, a_{i+1}, a_{i+2}\}}$. But there is no edge
between $a_i$ and $a_{i+2}$ in this graph, because they
commute. As the the order of the semigroup $\{ a_{i},
a_{i+1},a_{i+2}\}$ is of prime order and the lower non-nilpotent
graph of this semigroup is connected but not complete this yields
a contradiction with Proposition~\ref{connected-complete}.
\end{proof}

Note that $P_4$ and $C_5$ are isomorphic to their respective
complements.  So a question of interest is whether a  graph that
is isomorphic to its complement graph can occur as an upper
non-nilpotent graph of a semigroup. Note that $C_n$ (expect $n=5$)
is not isomorphic to its complement graph.

\vspace{12pt}
{\bf Acknowledgment}
The authors would like to thank the referee for a thorough report that resulted in a  much improved version of the paper.


\end{document}